\newtheorem{theorem}{Theorem}[section]
\newtheorem{proposition}[theorem]{Proposition}
\newtheorem{lemma}[theorem]{Lemma}
\newtheorem{definition}[theorem]{Definition}
\newtheorem{corollary}[theorem]{Corollary}
\newtheorem{example}[theorem]{Example}
\newcommand{\cS}{\mathcal{S}}
\newcommand{\psd}{\succeq}
\newcommand{\RR}{\mathbb{R}}
\title{Limitations on the expressive power of\\ convex cones without long chains of faces}
\author{James Saunderson\thanks{Department of Electrical and Computer Systems Engineering, Monash University, VIC 3800, Australia. 
Email: \texttt{james.saunderson@monash.edu}}}
\begin{document}
\maketitle

\begin{abstract}
A convex optimization problem in conic form involves minimizing a linear
functional over the intersection of a convex cone and an affine subspace.
In some cases, it is possible to replace a conic formulation using a certain cone, 
with a `lifted' conic formulation using another cone that is higher-dimensional, 
but simpler, in some sense. One situation in which this can be computationally
advantageous is when the higher-dimensional cone is a Cartesian product of many `low-complexity' cones,
such as second-order cones, or small positive semidefinite cones. 

This paper studies obstructions to a convex cone having a lifted
representation with such a product structure.  The main result says that whenever a convex
cone has a certain neighborliness property, then it does not have a lifted
representation using a finite product of cones, each of which has only short
chains of faces.  This is a generalization of recent work of Averkov
(`Optimal size of linear matrix inequalities in semidefinite approaches to
polynomial optimization', SIAM J.\ Appl.\ Alg.\ Geom., Vol.\ 3, No.\ 1, 128--151, 2019)
which considers only lifted representations using products of positive
semidefinite cones of bounded size.  Among the consequences of the main result is
that various cones related to nonnegative polynomials do not have lifted
representations using products of `low-complexity' cones, such as smooth cones,
the exponential cone, and cones defined by hyperbolic polynomials of low
degree.
\end{abstract}

% \begin{keywords}
% Conic optimization, lifts, cone factorization, hyperbolicity cone
% \end{keywords}
% 
% \begin{AMS}
%   52A20, 90C25, 52C07
% \end{AMS}

\section{Introduction}

A \emph{conic optimization problem} has the form
\begin{equation}
	\label{eq:conic} \textup{minimize}_{x} \langle c,x\rangle \quad\textup{subject to}\quad A(x) = b,\; x\in K
\end{equation}
where $K\subseteq \RR^n$ is a closed convex cone, $A:\RR^n\rightarrow \RR^m$ is a
linear map, and $c\in \RR^n$ and $b\in \RR^m$.  Different choices of convex
cones in \cref{eq:conic} lead to different classes of convex optimization
problems.  For instance, if $K=\RR_+^n$ is a nonnegative orthant, we obtain a
linear program; if $K$ is a finite Cartesian product of second-order cones
$\mathcal{Q} = \{(x,y,z)\in \RR^3: \sqrt{x^2+y^2}\leq z\}$, we obtain a
second-order cone program; if $K = \cS_+^k$ is the cone of $k\times k$ positive semidefinite
matrices, we obtain a semidefinite program.  
Computationally, conic
optimization problems are often easier to solve if 
$K= K_1\times \cdots \times K_m$ where each of the $K_i$ are convex
cones of `low-complexity'. One reason for this is that basic algorithmic
primitives related to the cone, such as computing the Euclidean projection of a point onto the cone, 
are separable across the factors and so are easily parallelizable.

It is natural, then, to try to understand which families of convex optimization problems can be expressed
in conic form with respect to a finite Cartesian product of `low-complexity' cones. 
The question can be phrased in geometric terms by using the notion of a lifted representation of a convex cone.
\begin{definition}[{\cite{gouveia2013lifts}}]
\label{def:lift}
If $C\subseteq \RR^n$ and $K\subseteq\RR^{d}$ are closed convex cones then $C$ has a \emph{$K$-lift} if
$C = \pi(K \cap L)$
where $\pi:\RR^d\rightarrow \RR^n$ is a linear map and $L\subseteq \RR^d$ is a linear subspace.
\end{definition}
If $C$ has a $K$-lift, then any conic optimization problem using the cone $C$ can be reformulated as a conic optimization 
problem using $K$. As such we are interested in understanding when a convex cone $C$ has a $K$-lift where $K$ is a 
Cartesian product of `low-complexity' cones. There are many examples where such reformulations are possible.
\begin{itemize}
	\item If $C$ is the set of symmetric positive semidefinite matrices that are sparse
	with respect to a chordal graph, then $C$ has a $\cS_+^{k_1}\times \cdots \times \cS_+^{k_\ell}$-lift
	where the $k_i$ are the sizes of the maximal cliques in the graph~\cite{agler1988positive,grone1984positive}. This observation 
	can be exploited to yield significant computational savings for semidefinite programs with sparse data~(see, e.g.,~\cite{vandenberghe2015chordal}).
	\item The cone of scaled diagonally dominant matrices~\cite{ahmadi2019dsos},
	and the cone of sums of nonnegative circuit polynomials~\cite{iliman2016amoebas}, both have second-order cone 
	lifts, or equivalently, $(\cS_+^2)^m$-lifts for some $m$.\footnote{The fact that sums of nonnegative circuit polynomials
	have $(\cS_+^2)^m$-lifts was established by Averkov~\cite{averkov2019optimal}.} 
	These have been used to give more tractable certificates of 
	nonnegativity for multivariate polynomials than those given by general sums of squares.
	\item Ben-Tal and Nemirovski's book~\cite{ben2001lectures} has many nontrivial examples of convex sets 
	that have second-order cone lifts.
	\item Cones of sums of arithmetic-mean geometric-mean functions (SAGE functions)~\cite{chandrasekaran2016relative}, 
	used to certify nonnegativity of signomial functions, have lifted representations using 
	a product of three-dimensional relative entropy cones (or, equivalently, exponential cones). 
\end{itemize}
This paper, on the other hand, explores \emph{limitations} 
on the expressive power of lifts using finite products of `low-complexity' cones.
Until recently, the only result in this direction was the fact that a convex cone 
has an $\RR_+^m$-lift for some finite $m$ if and only if it is a polyhedral cone.

In a breakthrough paper, Fawzi~\cite{fawzi2018representing} studied the
expressive power of second-order cone programming. Among his results is that 
$\cS_+^3$ has no second-order cone lift.
Fawzi identified the importance of certain neighborliness
properties of the positive semidefinite cone as an obstruction to constructing
second-order cone lifts. He also introduced a combinatorial approach to establish
results of this type, via Tur\'an's theorem.

Averkov~\cite{averkov2019optimal} subsequently significantly extended Fawzi's
approach.  He introduced the \emph{semidefinite extension degree} of a convex
cone---the smallest $k$ such that $C$ has an $(\cS_+^k)^m$-lift for some positive integer $m$.
Averkov established that convex cones with a certain $k$-neighborliness property (made precise in
\cref{def:nb}) have semidefinite extension degree at least
$k+1$.  To prove this result, Averkov used Ramsey's theorem for uniform
hypergraphs to provide the key combinatorial obstruction. 

The present work moves beyond studying lifts using products of positive semidefinite cones of bounded size.
Instead, we consider a notion of `low-complexity' that just depends on the face lattice of a cone.
In particular, we consider the expressive power of $K$-lifts where $K$ is a finite product
of cones, each of which has only short chains of faces. 
The main result
(\cref{thm:main}) gives an explicit obstruction to a cone
having a lifted representation of this type. The obstruction is the same as that considered by Averkov, 
and is based on the existence of arbitrarily large finite collections of extreme rays of the cone that satisfy 
a certain neighborliness property (see \cref{def:nb}).

\subsection{Chains of faces, neighborliness, and our main result}

\subsubsection{Chains of faces}
If $K$ is a closed convex cone, then a subset $F\subseteq K$ is a \emph{face} if 
$x,y\in K$ and $\alpha x + (1-\alpha)y \in F$ for some $\alpha\in (0,1)$ implies that $x,y\in F$.
Note that the empty set is always a face. A collection of faces $F_1,F_2,\ldots,F_\ell\subseteq K$ is a \emph{chain of length $\ell$} if 
$F_1 \subsetneq F_2 \subsetneq \cdots \subsetneq F_\ell$.
 For a convex cone $K$, define 
\[ \ell(K) = \textup{maximum length of a chain of nonempty faces of $K$}.\]
 This is well-defined because $\ell(K) \leq \dim(K)+1$ where $\dim(K)$ is the dimension of the span of $K$ 
(see \cref{sec:ced-bound}). Crucially, $\ell(\cdot)$ is monotone, in the sense that if 
$F$ is a face of $K$ and $F \subsetneq K$ then $\ell(F)< \ell(K)$.

This quantity appears naturally in the study of facial reduction algorithms for conic optimization problems
(see, e.g.,~\cite{pataki2013strong,waki2013facial}), and has been used to give an upper bound on the Carath\'eodory number of 
a closed, pointed convex cone~\cite{ito2017bound}. 
In this paper we will think of cones without any long chains of faces as being of low complexity. Indeed $\ell(K) = 1$
if and only if $K$ is a linear subspace, and $\ell(K) = 2$ if and only if $K$ is a closed halfspace.  

\subsubsection{Neighborliness properties}
In \cref{def:nb}, to follow, we describe the  cones for which we can show the non-existence of lifts. This is the same class 
of convex cones that is considered by Averkov~\cite{averkov2019optimal}. 
In the following definition, and throughout, we fix an inner product $\langle\cdot,\cdot\rangle$ 
on $\RR^n$ and denote the associated norm by $\|\cdot\|$. If $C\subseteq \RR^n$ is a set, 
then $C^* = \{f\;:\; \langle f,x\rangle \geq 0\;\;\textup{for all $x\in C$}\}$ is the 
corresponding dual cone.
A closed convex cone $C\subseteq \RR^n$ is \emph{pointed} if $C \cap (-C) = \{0\}$ and 
\emph{full-dimensional} if $\textup{span}(C) = \RR^n$. A convex cone is \emph{proper} 
if it is closed, pointed, and full-dimensional.
For a convex cone $C\subseteq \RR^n$, we denote the set of extreme rays of $C$
by $\textup{Ext}(C)$
and the set of normalized extreme rays by $\textup{ext}(C):=\{v\in \RR^n\;:\; \|v\|=1,\; \RR_+v\in \textup{Ext}(C)\}$. 
 \begin{definition}
\label{def:nb}
 	Let $C$ be a proper convex cone. If $V\subseteq \textup{ext}(C)$ is a subset of  
 	normalized extreme rays of $C$, then $C$ is \emph{$k$-neighborly with respect to $V$} if for every $k$-element subset
 	$W\subset V$, there is some linear functional $f_W\in C^*$ such that $\langle f_W,v\rangle > 0$ if 
 	$v\in V\setminus W$ and $\langle f_W,v\rangle = 0$ if $v\in W$. 
 \end{definition}
In this paper we will be particularly interested in the case where $C$ is $k$-neighborly with respect to 
arbitrarily large finite subsets of normalized extreme rays of $C$. 

\begin{example}[Positive semidefinite cone] 
\label{eg:psd}
The cone of $(k+1)\times (k+1)$ positive semidefinite symmetric matrices 
is $k$-neighborly with respect to 
 \begin{equation*}
 	V = \{v_iv_i^T/\|v_i\|^2\;:\; i\in \mathbb{N}\}\quad\textup{where}\quad v_i := (1,i,i^2,\ldots,i^k). 
 \end{equation*}
 To see why this is true, for each set $W$ of $k$ natural numbers, define the non-negative polynomial $p_W$ and the vector 
 $c(W)\in \RR^{k+1}$ such that 
 \[ p_W(t) = \left[\prod_{i\in W}(t-i)\right]^2 = \left(\sum_{j=0}^{k}c(W)_j t^j\right)^2 = \textup{tr}(c(W)c(W)^T v_tv_t^T).\]
Observe that $p_W(t)$ vanishes if and only if $t\in W$. 
Consequently $c(W)c(W)^T\in (\cS_+^{k+1})^* = \cS_+^{k+1}$ vanishes on $v_iv_i^T$ if and only if $i\in W$. 
\end{example}
We are now in a position to state our main result.  
\begin{theorem}
\label{thm:main}
Let  $C$ be a proper convex cone, let $m$ be a positive integer, and let 
$K_1,\ldots,K_m$ be closed convex cones with $\ell(K_i)\leq k+1$ for each $i=1,2,\ldots,m$.
Suppose that, for each $N\in \mathbb{N}$ there exists a subset $V_N\subseteq \textup{ext}(C)$
such that $C$ is $k$-neighborly with respect to $V_N$ and $|V_N| \geq N$. 
Then $C$ does not have a $K_1\times \cdots \times K_m$-lift.
\end{theorem}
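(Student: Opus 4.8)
The plan is to follow the combinatorial strategy pioneered by Fawzi and Averkov, but to replace the Ramsey-theoretic argument for bounded-size PSD cones with one that only uses the hypothesis $\ell(K_i) \le k+1$. Suppose for contradiction that $C = \pi((K_1 \times \cdots \times K_m) \cap L)$. The key structural fact I would establish first is a lifting-of-faces principle: if $C = \pi(K \cap L)$, then for any face $G$ of $C$ there is an associated face $F$ of $K$ (essentially the smallest face of $K \cap L$ whose image under $\pi$ meets the relative interior of $G$, or the preimage construction from the Gouveia--Parrilo--Thomas framework), and this correspondence is inclusion-preserving and strict: if $G_1 \subsetneq G_2$ are faces of $C$ then the associated faces of $K$ satisfy $F_1 \subsetneq F_2$. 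Since a face of $K = K_1 \times \cdots \times K_m$ is a product $F = F^{(1)} \times \cdots \times F^{(m)}$ with each $F^{(i)}$ a face of $K_i$, a chain of faces of $C$ of length $\ell$ lifts to a chain of faces of $K$ of length $\ell$, which by the monotonicity and additivity of $\ell(\cdot)$ over products forces $\ell \le \sum_{i=1}^m (\ell(K_i) - 1) + 1 \le mk + 1$. So the lift bounds the length of chains of faces of $C$.

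The next step is to extract long chains of faces of $C$ from the neighborliness hypothesis. Fix a large $N$ and the set $V_N$ with $|V_N| \ge N$ on which $C$ is $k$-neighborly. Here is where I would use a Ramsey/Turán-type pigeonhole argument, but now only on the lift structure rather than on sizes of PSD blocks. For each normalized extreme ray $v \in V_N$, the ray $\RR_+ v$ is a face of $C$, so it lifts to a face $F_v = F_v^{(1)} \times \cdots \times F_v^{(m)}$ of $K$. More usefully, for each $k$-subset $W \subset V_N$, the linear functional $f_W \in C^*$ gives, via the lift, a functional in $K^*$, whose kernel face in $K$ is a product of faces, and this face contains exactly those $F_v$ with $v \in W$ and none of the $F_v$ with $v \in V_N \setminus W$. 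The strategy is to color or classify the rays $v \in V_N$ (or $k$-subsets, or pairs) according to which block $i$ "sees" them — i.e. in which coordinate $i$ the faces $F_v^{(i)}$ genuinely grow — and apply Ramsey's theorem to find a large monochromatic subset $V' \subseteq V_N$ for which all the face-growth happens in a single block $K_{i_0}$. On this subset, the $k$-neighborliness of $C$ with respect to $V'$ transfers to a $k$-neighborliness-like statement internal to $K_{i_0}$: one gets $|V'|$ distinct faces of $K_{i_0}$ together with, for each $k$-subset, a functional in $K_{i_0}^*$ vanishing on exactly the corresponding $k$ of them.

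Finally, I would derive a contradiction by showing that $K_{i_0}$, which has $\ell(K_{i_0}) \le k+1$, cannot support such a configuration once $|V'|$ exceeds $k$. The point is that a cone with $\ell(K_{i_0}) \le k+1$ has chains of proper faces of length at most $k$; being $k$-neighborly with respect to a set of $k+1$ or more extreme rays would, via an inductive "peeling" argument — repeatedly intersecting with the kernel of a functional $f_W$ to drop into a strictly smaller face while retaining enough extreme rays — produce a chain of faces of length $k+2$, contradicting $\ell(K_{i_0}) \le k+1$. Concretely, one shows that $k$-neighborliness with respect to $V'$ with $|V'| \ge k+2$ forces a strictly decreasing sequence $K_{i_0} = F_0 \supsetneq F_1 \supsetneq \cdots \supsetneq F_{k+1}$ of nonempty faces, each obtained as the face of the previous one cut out by vanishing of one more extreme ray's functional, which is impossible.

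The main obstacle I anticipate is the Ramsey step: correctly formulating what gets colored so that a monochromatic set really does isolate a single block $K_{i_0}$ in which a genuine $k$-neighborly configuration survives. Naively coloring single rays is too crude (a ray's lifted face can be nontrivial in several blocks at once); the right object is likely a coloring of $(k{+}1)$-subsets or $k$-subsets of $V_N$, recording in which block the "new" vanishing behavior occurs as one adds a ray to a $k$-subset, and one must check that this data is consistent enough on a monochromatic subset to reconstruct the neighborliness certificates inside $K_{i_0}$. Getting the bookkeeping of faces-of-products versus faces-of-factors exactly right, and ensuring that strict inclusions are preserved at every stage (so that the chain-length bound bites), is the crux; once that is in place, the final contradiction via $\ell(K_{i_0}) \le k+1$ is essentially the monotonicity of $\ell(\cdot)$.
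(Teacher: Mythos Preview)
Your proposal has the right overall shape (factorization theorem + Ramsey-type argument + face-lattice monotonicity), and your final ``peeling'' step is essentially correct: if one really had points $b_{i_0}(v_1),\ldots,b_{i_0}(v_{k+1})\in K_{i_0}$ and functionals $a_{i_0}(T)\in K_{i_0}^*$ realizing $k$-neighborliness \emph{inside a single factor} $K_{i_0}$, then successively intersecting with the hyperplanes $\{a_{i_0}(T_j)=0\}$ does produce a chain of $k+2$ nonempty faces of $K_{i_0}$, contradicting $\ell(K_{i_0})\le k+1$. The gap is exactly where you flag it: the Ramsey step that is supposed to isolate a single block $i_0$. Coloring rays, pairs, or $(k{+}1)$-subsets by ``which block sees the positivity'' does not work, because for a fixed $(k{+}1)$-set $W$ the index $i$ witnessing $\langle a_i(W\setminus\{s\}),b_i(s)\rangle>0$ can genuinely depend on which $s\in W$ is removed, and there is no reason these indices coincide even after passing to a monochromatic subset. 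Without that coincidence you never obtain full $k$-neighborliness internal to one $K_{i_0}$, and doing the peeling in the product $K$ instead only yields a chain of length $k+2$, which is useless once $m\ge 2$ since $\ell(K)$ can be as large as $mk+1$. Your first paragraph, bounding $\ell(C)$ via a lift-of-faces principle, is a red herring for the same reason: $\ell(C)$ is a fixed finite number and the neighborliness hypothesis does not make it large.

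The paper avoids the single-block reduction entirely. It colors each $k$-subset $T$ by the tuple $\bigl(\ell(F_{K_1}(\sum_{t\in T}b_1(t))),\ldots,\ell(F_{K_m}(\sum_{t\in T}b_m(t)))\bigr)\in\{1,\ldots,k+1\}^m$, then applies Ramsey to find a $(k{+}1)$-set $W$ on which this tuple is constant. The missing ingredient in your outline is a Carath\'eodory-type lemma: if $x_1,\ldots,x_n\in K_i$ and $\ell\bigl(F(\sum x_j)\bigr)\le k+1$, then some subset of at most $k$ of the $x_j$ already generates the same face. Applying this in each block to the $k+1$ points $\{b_i(w):w\in W\}$, together with monochromaticity, forces $F_{K_i}(\sum_{t\in T}b_i(t))=F_{K_i}(\sum_{w\in W}b_i(w))$ for \emph{every} $k$-subset $T\subset W$ and \emph{every} $i$ simultaneously. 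Then, writing $W=T\cup\{s\}$, the functional $a_i(T)$ vanishes on this common face and hence on $b_i(s)$ for all $i$, contradicting $\sum_i\langle a_i(T),b_i(s)\rangle>0$. So the contradiction is obtained across all blocks at once, not by funneling everything into one of them.
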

We remark that if a proper convex cone $C$ is $k$-neighborly with respect to an infinite set $V\subseteq \textup{ext}(C)$, then 
$C$ is $k$-neighborly with respect to any finite subset of $V$, and so the neighborliness hypothesis of \cref{thm:main} holds.

\Cref{sec:nb-eg} gives examples of convex cones that are $k$-neighborly with respect to 
arbitrarily large finite sets of normalized extreme rays. 
\Cref{sec:ced-bound} gives examples of convex cones with only short chains of faces.
Combining these allows us to specialize \cref{thm:main} to produce a range of irrepresentability results. 
The following statements about the non-existence of lifts of certain positive semidefinite cones
are examples of the kind of results that follow from \cref{thm:main}: 
\begin{itemize}
	\item $\cS_+^3$ has no $K$-lift where $K$ is a finite Cartesian product of smooth convex cones (such as 
	second-order cones);
	\item $\cS_+^4$ has no $K$-lift where $K$ is a finite Cartesian product of three-dimensional convex cones
	(such as cones over the epigraphs of univariate convex functions);
	\item $\cS_+^{k+1}$ has no $(\cS_+^{k})^m$-lift where $m$ is a positive integer, a result from~\cite{averkov2019optimal};
	\item $\cS_+^{k+1}$ has no $K$-lift where $K$ is a hyperbolicity cone corresponding to a hyperbolic 
	polynomial with all its irreducible factors having degree at most $k$. 
\end{itemize} 

Our proof of \cref{thm:main} follows Averkov's approach to
lower-bounding the semidefinite extension degree.  In fact, the underlying
combinatorial part of the argument is exactly the same.  The main new
contribution is that all of the algebraic structure of the positive
semidefinite cone used by Averkov can be done away with, and replaced with
basic properties of face lattice of convex cones.

\subsection{Notation}
For a convex cone $C$, let 
$\textup{relint}(C)$ denote the relative interior of $C$. If $S\subseteq \RR^n$ let $\textup{cone}(S)$ be the cone generated 
by $S$, i.e., the collection of all non-negative combinations of elements of $S$. 
If $n$ is a positive integer let $[n] := \{1,2,\ldots,n\}$. If $S$ is a finite set 
let $\binom{S}{n}$ be the collection of $n$-element subsets of $S$. Other notation, needed only for the proofs, will be 
introduced in \cref{sec:pre}. 

\subsection{Outline}

The rest of the paper is structured as follows. In \cref{sec:con} we discuss the consequences of \cref{thm:main}. 
We state a number of examples (many from Averkov's work) of cones that are $k$-neighborly with respect to an infinite set.
We then give bounds on the length of chains of faces for a number of families of convex cones. From the discussion one can extract
many irrepresentability results. For the sake of brevity we will not exhaustively state such results. 
In \cref{sec:pre} we briefly introduce some of the technical tools used 
in the proof of the main result. In \cref{sec:pf} we generalize the key technical results of~\cite{averkov2019optimal}
to our setting, prove these results, and consequently complete the proof of \cref{thm:main}.

\section{Consequences of the main result}
\label{sec:con}
To appreciate the consequences of our main result, this section is devoted to
giving examples of convex cones to which \cref{thm:main} can be applied. In
\cref{sec:nb-eg} we give examples of convex cones  that are $k$-neighborly (for
some $k$) with respect to arbitrarily large finite sets of normalized extreme
rays.  These are the convex cones that can be used as $C$ in \cref{thm:main}.
In \cref{sec:ced-bound} we give upper bounds on $\ell(\cdot)$, for many convex
cones. These are the cones that can be used as the $K_i$ in \cref{thm:main}.

\subsection{Cones $k$-neighborly with respect to arbitrarily large finite sets}
\label{sec:nb-eg}

\subsubsection{Non-polyhedral cones}
If $C$ is a proper, non-polyhedral cone then it has infinitely many extreme rays. Moreover, it has infinitely many \emph{exposed} 
extreme rays (extreme rays that can be obtained as the intersection of $C$ with a hyperplane), because 
exposed extreme rays are dense in the set of all extreme rays by Straszewicz's theorem~\cite[Theorem 18.6]{rockafellar2015convex}. 
It follows that $C$ is $1$-neighborly with 
respect to the (infinite) set of normalized exposed extreme rays.

\subsubsection{Cones related to nonnegative polynomials and sums of squares} 
A number of interesting examples are special cases of the following result, which is essentially~\cite[Corollary 3]{averkov2019optimal}.
\begin{proposition}
\label{prop:Cav}
Let $X\subseteq \RR^n$ have nonempty interior. Define 
	\begin{align*}
		P_{n,2d}(X) & := \{\textup{polynomials $p$ of degree $\leq 2d$ such that $p(x)\geq 0$ for all $x\in X$}\}\\
		\Sigma_{n,2d} & := \{\textup{polynomials of degree $\leq 2d$ in $n$ variables that are sums of squares}\}.
	\end{align*}
	Let $C$ be a closed convex cone that satisfies 
	$P_{n,2d}(X)^*\subseteq C \subseteq \Sigma_{n,2d}^*$. Then for each $N\in \mathbb{N}$ there is a 
	set $V\subseteq \textup{ext}(C)$ with $|V|\geq N$ such that $C$ is $\left(\binom{n+d}{d}-1\right)$-neighborly 
	with respect $V$. 
\end{proposition}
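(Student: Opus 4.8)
The plan is to lift the computation of \cref{eg:psd} to this more general setting, following the argument of \cite[Corollary 3]{averkov2019optimal}. I would work in the space $\RR[x]_{\leq 2d}$ of polynomials in $n$ variables of degree at most $2d$, so that $\Sigma_{n,2d}, P_{n,2d}(X)\subseteq \RR[x]_{\leq 2d}$ and $C$ lies in the dual space $(\RR[x]_{\leq 2d})^*$, with the natural pairing $\langle f,\phi\rangle = \phi(f)$ (an inner product on each side under the monomial identification). For $a\in\RR^n$ let $\ell_a\in(\RR[x]_{\leq 2d})^*$ be the evaluation functional $p\mapsto p(a)$, and let $[a]_d\in\RR^s$, where $s := \binom{n+d}{d}$, collect the values at $a$ of the monomials of degree $\leq d$. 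The first step is to record that $\ell_a\in\textup{Ext}(C)$ for every $a\in X$: on the one hand $\langle p,\ell_a\rangle = p(a)\geq 0$ for all $p\in P_{n,2d}(X)$, so $\ell_a\in P_{n,2d}(X)^*\subseteq C$; on the other hand, under the standard isomorphism identifying $\Sigma_{n,2d}^*$ with the cone of positive semidefinite moment matrices (a linear section of $\cS_+^s$), the functional $\ell_a$ corresponds to the rank-one matrix $[a]_d[a]_d^T\in\cS_+^s$, and since rank-one matrices are extreme rays of $\cS_+^s$ and extreme rays of $\cS_+^s$ remain extreme in any linear section of $\cS_+^s$ containing them, $\ell_a$ is an extreme ray of $\Sigma_{n,2d}^*$. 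As an extreme ray of a cone stays extreme in any subcone that contains it, the sandwich $P_{n,2d}(X)^*\subseteq C\subseteq \Sigma_{n,2d}^*$ then gives $\ell_a\in\textup{Ext}(C)$. One also notes that $C$ is proper: closed by hypothesis, pointed because it is contained in the pointed cone $\Sigma_{n,2d}^*$, and full-dimensional because it contains $P_{n,2d}(X)^*$, whose span is everything since the common kernel of $\{\ell_a : a\in\textup{int}(X)\}$ is $\{0\}$. Without loss of generality $d\geq 1$ and $N\geq s$.

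Next I would choose $N$ points $a_1,\dots,a_N\in\textup{int}(X)$ in general linear position, meaning that every $s$ of the vectors $[a_1]_d,\dots,[a_N]_d$ are linearly independent in $\RR^s$. This is possible because $a\mapsto[a]_d$ has image spanning $\RR^s$ on every nonempty open subset of $\RR^n$---a nonzero $c\in\RR^s$ orthogonal to all such vectors would be the coefficient vector of a nonzero degree-$\leq d$ polynomial vanishing on an open set. Consequently, for each $s$-element $S\subseteq[N]$, linear dependence of $\{[a_i]_d : i\in S\}$ is the vanishing of a determinant which, as a polynomial in the points $a_i$ ($i\in S$), is not identically zero on $(\textup{int}(X))^{|S|}$ and so defines a nowhere-dense subset; a generic $(a_1,\dots,a_N)\in(\textup{int}(X))^N$ avoids the union of these finitely many bad sets. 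Fix such points; they are distinct, hence so are the rays they span, and $V := \{\ell_{a_i}/\|\ell_{a_i}\| : i\in[N]\}$ is a set of $N$ distinct elements of $\textup{ext}(C)$.

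Finally I would verify that $C$ is $(s-1)$-neighborly with respect to $V$. Given $W\subseteq V$ with $|W| = s-1$, say $W = \{\ell_{a_i}/\|\ell_{a_i}\| : i\in I\}$ with $|I| = s-1$, the vectors $\{[a_i]_d : i\in I\}$ span a hyperplane of $\RR^s$, so their orthogonal complement is spanned by a single nonzero $c$; let $g\in\RR[x]_{\leq d}$ be the polynomial with coefficient vector $c$. Then $g(a_i) = \langle[a_i]_d,c\rangle = 0$ for $i\in I$, while $g(a_j)\neq 0$ for $j\notin I$, since $g(a_j) = 0$ would place $[a_j]_d$ in the span of $\{[a_i]_d : i\in I\}$, contradicting the linear independence of the $s$ vectors indexed by $I\cup\{j\}$. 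Now $f_W := g^2$ is a square of a degree-$\leq d$ polynomial, so $f_W\in\Sigma_{n,2d} = (\Sigma_{n,2d}^*)^*\subseteq C^*$ (the last inclusion because $C\subseteq\Sigma_{n,2d}^*$), and $\langle f_W,\ell_{a_i}/\|\ell_{a_i}\|\rangle = g(a_i)^2/\|\ell_{a_i}\|$, which is $0$ for $i\in I$ and strictly positive for $i\notin I$. This is exactly the condition of \cref{def:nb}, so $C$ is $(s-1)$-neighborly with respect to $V$; since $s-1 = \binom{n+d}{d}-1$ and $|V| = N$, the proposition follows.

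I expect the only non-routine ingredient to be the extremality of each evaluation functional in $\Sigma_{n,2d}^*$, which is where the algebraic structure of the sum-of-squares cone is used (through the rank-one moment-matrix description); the remaining pieces are the elementary observation that extreme rays descend to sandwiched subcones, a standard genericity argument for the points, and the duality relation $\Sigma_{n,2d} = (\Sigma_{n,2d}^*)^*\subseteq C^*$.
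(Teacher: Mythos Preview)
The paper does not actually supply a proof of \cref{prop:Cav}; it simply attributes the result to \cite[Corollary~3]{averkov2019optimal}. Your argument is correct and is exactly the expected one: it is the direct multivariate generalization of the moment-curve computation in \cref{eg:psd}, using evaluation functionals $\ell_a$ as the extreme rays (via the rank-one moment matrix $[a]_d[a]_d^{T}$), a genericity argument to place $N$ interior points of $X$ so that any $\binom{n+d}{d}$ of the vectors $[a_i]_d$ are independent, and the square $g^2\in\Sigma_{n,2d}\subseteq C^*$ of an interpolating polynomial as the separating functional $f_W$. The two sandwich inclusions are used precisely where you use them---$P_{n,2d}(X)^*\subseteq C$ to place the $\ell_{a_i}$ inside $C$, and $C\subseteq\Sigma_{n,2d}^*$ both to certify their extremality (extreme rays of a cone remain extreme in any subcone) and to ensure $\Sigma_{n,2d}\subseteq C^*$---so nothing is missing.
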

From~\cref{prop:Cav} we can construct many examples. 
Perhaps the simplest, which are also discussed in Averkov's paper, are the following:
\begin{itemize}
	\item The (self-dual) cone of $k\times k$ positive semidefinite matrices $\cS_+^k$
is linearly isomorphic to $\Sigma_{k-1,2}$, since a quadratic polynomial in $k-1$ variables is nonnegative 
if and only if it has the form 
\[ q(x) = \begin{bmatrix} 1& x^T \end{bmatrix}Q\begin{bmatrix}1\\x\end{bmatrix}\]
where $Q$ is positive semidefinite.
	It then follows from \cref{prop:Cav} that $\cS_+^k$ is $(k-1)$-neighborly with respect to arbitrarily large finite
	subsets of normalized extreme rays. We gave a direct argument that $\cS_+^k$ is $(k-1)$-neighborly with respect to 
	an infinite subset of extreme rays in~\cref{eg:psd}.
	\item The dual of the cone of univariate nonnegative polynomials of degree at most $2d$, i.e., $P_{1,2d}(\RR)^*$ 
	is $d$-neighborly with respect to arbitrarily large finite subsets of normalized extreme rays.
\end{itemize}

\subsubsection{Cones over $k$-neighborly manifolds} If $\mathcal{M}\subseteq \RR^m$ is a smooth embedded manifold, 
Kalai and Widgerson~\cite{kalai2008neighborly} say that $\mathcal{M}$ is \emph{$k$-neighborly}
if, for every subset $X\subseteq \mathcal{M}$ of cardinality $k$, there exists a linear functional $f$, and a real number $b$,
such that
$\langle f,x\rangle = b$ for all $x\in X$ and $\langle f,x\rangle > b$ for all $x\in \mathcal{M}\setminus X$. 
If $\mathcal{M}$ is a $k$-neighborly manifold of positive dimension, then
$\textup{cone}(\mathcal{M}\times \{1\})\subseteq \RR^{m+1}$ is $k$-neighborly with respect to the 
infinite set $\{v/\|v\|\;:\; v\in \mathcal{M}\times \{1\}\}$ of normalized extreme rays.

\subsubsection{Cones with neighborly faces and derivative relaxations of $\cS_+^k$}
Suppose that $C$ is a convex cone and $F$ is an exposed face of $C$. One can show that if $F$ is $k$-neighborly with respect 
to $V\subseteq \textup{ext}(F)\subseteq \textup{ext}(C)$ then $C$ is also $k$-neighborly with respect to $V$. 
This simple observation gives some interesting additional examples beyond cones related to nonnegative polynomials. 
We describe just one example here, which is an example of a \emph{hyperbolicity cone} (a class of convex cones we discuss more
at the end of \cref{sec:ced-bound}).

The cone of $k\times k$ positive semidefinite matrices can be described as
\[ \cS_+^k = \{X\in \cS^k\;:\; E_{1,k}(X) \geq 0,\; E_{2,k}(X) \geq 0,\; \ldots,\; E_{k,k}(X) \geq 0\}\]
where $E_{\ell,k}(X)$ is the sum of the $\ell\times \ell$ principal minors of the $k\times k$ symmetric  matrix $X$
(see, e.g.,~\cite{renegar2006hyperbolic}).
For $0\leq \ell \leq k-1$ one can define a cone (which turns out to be convex) by keeping only the first $k-\ell$ of these inequalities:
\[ \cS_+^{k,(\ell)} = \{X\in \cS^k\;:\; E_{1,k}(X) \geq 0,\; \ldots,\; E_{k-\ell,k}(X) \geq 0\}.\]
These are often called the \emph{derivative relaxations} or \emph{Renegar derivatives} of the positive semidefinite cone.
For the connection to derivatives, and the fact that these are convex cones, see~\cite{renegar2006hyperbolic}. 
Consider the intersection of $\cS_+^{k,(\ell)}$ with the subspace $L$
of symmetric matrices of the form $\left[\begin{smallmatrix} Y & 0\\0 & 0\end{smallmatrix}\right]$
where only the top $(k-\ell)\times (k-\ell)$ block is nonzero. Restricted to this subspace we have that 
\[ E_{p,k}\left(\begin{bmatrix} Y & 0\\0 & 0\end{bmatrix}\right) = 
	\begin{cases} E_{p,k-\ell}(Y) & \textup{if $p\leq k-\ell$}\\0 & \textup{otherwise.}\end{cases}\]
As such $\cS_+^{k,(\ell)}\cap L$ is a face of $\cS_+^{k,(\ell)}$ that is linearly isomorphic
to $\cS_+^{k-\ell}$. Since $\cS_+^{k-\ell}$ is $(k-\ell-1)$-neighborly with respect to an infinite set of normalized 
extreme rays, it follows that $\cS_+^{k,(\ell)}$ has the same property.

\subsection{Cones with only short chains of faces}
\label{sec:ced-bound}

The innovation of \cref{thm:main} is that it rules out $K$-lifts where $K$ is a finite Cartesian product of 
cones, each of which has only short chains of faces. 
In this section we now give a number of examples of cones $K$ that have only short chains of faces.

\subsubsection{Rays} The ray $\RR_+$ has two nonempty faces: $\{0\}$ and $\RR_+$ itself. It follows that $\ell(\RR_+) = 2$.
From \cref{thm:main} we recover the fact that any cone that is $1$-neighborly with respect to an infinite set of normalized 
extreme rays (i.e., a cone with infinitely many exposed extreme rays) cannot have a $\RR_+^m$-lift. 

\subsubsection{Smooth cones}
Following~\cite{liu2018exact}, for instance, we call a pointed closed convex cone $K$ \emph{smooth} if its only non-empty faces 
are $\{0\}$ or $K$ or its extreme rays. As such, any smooth cone has $\ell(K) \leq 3$. 
For example, the second-order cone
\[ \mathcal{Q}_{n+1} = \{(x_0,\ldots,x_{n})\in \RR^{n+1}\;:\; \sqrt{x_1^2 + \cdots + x_n^2} \leq x_0\}\]
is a smooth cone. From \cref{thm:main} we can conclude that any cone $2$-neighborly with respect to arbitrarily large finite
sets of normalized extreme rays does not have a lift using a finite product of smooth cones.

\subsubsection{Low-dimensional cones}
Since the dimension strictly increases along chains of 
faces~\cite[Corollary 18.1.3]{rockafellar2015convex}, we always have that 
\[ \ell(K) \leq \textup{dim}(K)+1.\]
For example, the exponential cone $K = \textup{cl}\{(x,t,y)\in \RR^2\times \RR_{++}: ye^{x/y} \leq t\}$ satisfies $\ell(K) \leq 3+1$. 
More generally, if $K = \textup{cl}\{(x,t,y)\in \RR^2\times \RR_{++}: y\,g(x/y) \leq t\}$ is the closure of the cone over the epigraph 
of any univariate convex function $g$, then $\ell(K) \leq 3+1$. 
From \cref{thm:main} we can conclude that any cone $k$-neighborly with respect to arbitrarily large finite sets of normalized 
extreme rays 
does not have a lift using a finite product of $k$-dimensional cones.

\subsubsection{The positive semidefinite cone}
The rank of a symmetric matrix is constant on the relative interior of faces of the positive semidefinite cone. 
Moreover, the rank function strictly increases on chains of faces. As such,
\[ \ell(\cS_+^k) \leq k+1.\]
On the other hand, it is easy to construct a chain of non-empty faces of length $k+1$, so we have that $\ell(\cS_+^k) = k+1$.
\Cref{thm:main}, in this context, recovers Averkov's result that if $C$ is $k$-neighborly with respect to arbitrarily large
finite sets of normalized extreme rays then $C$ has no $(\cS_+^k)^m$-lift.

\subsubsection{Hyperbolicity cones}
A class of convex cones that generalizes the positive semidefinite cone are hyperbolicity cones. 
A homogeneous polynomial $p$ of degree $d$ in $n$ variables with real coefficients 
is called \emph{hyperbolic with respect to $e\in \RR^n$} if 
$p(e) > 0$ and, for all $x\in \RR^n$, the univariate polynomial $t\mapsto p(te-x)$ has only real roots. 
We call these real roots the \emph{hyperbolic eigenvalues of $x$}. There is an associated closed cone
\[ \Lambda_+(p,e) = \{x\in \RR^n\;:\; \textup{all hyperbolic eigenvalues of $x$ are nonnegative}\}\]
which is convex, a result of G{\aa}rding~\cite{gaarding1959inequality}.
As an example, the determinant restricted to symmetric matrices is hyperbolic with respect to the identity matrix, and the 
associated hyperbolicity cone is the positive semidefinite cone. 

With a hyperbolic polynomial, one can associate a rank function by defining
\[ \textup{rank}_{p,e}(x) = \textup{\# non-zero hyperbolic eigenvalues of $x$}.\]
Renegar~\cite[Theorem 26]{renegar2006hyperbolic} has showed that if $F$ is a face of a hyperbolicity cone $\Lambda_+(p,e)$ then 
every point in the relative interior of $F$ has the same hyperbolic rank, and every point in the boundary 
of $F$ has strictly smaller rank. Consequently, there is a well defined notion of the hyperbolic rank of a face, and 
the hyperbolic rank function is strictly increasing along chains of faces.
Since any point in the relative interior of the hyperbolicity cone has rank $d = \textup{degree}(p)$, we have the bound
\[ \ell(\Lambda_+(p,e)) \leq d + 1.\]
The following gives a natural sufficient condition under which this bound is tight.
\begin{proposition}
\label{prop:hyprkone}
Suppose that $p$ is homogeneous of degree $d$, hyperbolic with respect to $e$, 
and the associated hyperbolicity cone $\Lambda_+(p,e)$ is pointed. If all of the extreme rays of $\Lambda_+(p,e)$ have
hyperbolic rank one then $\ell(\Lambda_+(p,e)) = d+1$.
\end{proposition}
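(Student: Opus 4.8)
The plan is to construct an explicit chain of nonempty faces of $\Lambda_+(p,e)$ of length $d+1$, which combined with the general upper bound $\ell(\Lambda_+(p,e)) \leq d+1$ recorded above gives equality. Since the hyperbolic rank function is well-defined on faces, strictly increasing along chains, and ranges in $\{0,1,\ldots,d\}$ (rank $d$ being achieved at interior points), it suffices to exhibit nonempty faces of every rank $0,1,\ldots,d$, nested by inclusion. The rank-$0$ face is the lineality space $\Lambda_+(p,e)\cap(-\Lambda_+(p,e))$, which equals $\{0\}$ by the pointedness hypothesis; the rank-$d$ face is $\Lambda_+(p,e)$ itself. The work is to fill in the intermediate ranks.

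First I would use the hypothesis that every extreme ray has hyperbolic rank one: since $\Lambda_+(p,e)$ is pointed, it is the conical hull of its extreme rays, so we may pick extreme generators $x_1,x_2,\ldots$ of rank one. The natural candidate for a face of rank $r$ is $F_r := \text{face}(\text{cone}(x_1,\ldots,x_r))$, the smallest face containing the cone generated by $r$ chosen extreme rays. One then needs two things: that the $x_i$ can be chosen so that $\text{cone}(x_1,\ldots,x_r)$ has a relative-interior point of rank exactly $r$ (so that $F_r$ has rank $r$), and that these faces are strictly nested. For the rank computation I would invoke the subadditivity of the hyperbolic rank under addition together with the fact (from Renegar's theory) that for $x$ in the cone, $\text{rank}_{p,e}(x)$ equals the rank of the face $F_x$ it lies in the relative interior of; by choosing successive extreme rays $x_{r+1}$ not contained in the span/face $F_r$ already built, one forces $\text{rank}_{p,e}(x_1+\cdots+x_{r+1}) > \text{rank}_{p,e}(x_1+\cdots+x_r)$, while the rank-one property of each $x_i$ caps the increment at exactly one. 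Strict nesting $F_1\subsetneq F_2\subsetneq\cdots$ then follows from strict monotonicity of rank on chains.

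The main obstacle I anticipate is establishing that one can keep choosing extreme rays $x_{r+1}$ that genuinely increase the rank, i.e., that the process does not stall at some rank $r<d$ with every extreme ray already "absorbed" into $F_r$. If it did stall, then $\Lambda_+(p,e)$ would be generated by extreme rays all lying in a proper face $F_r$, forcing $\Lambda_+(p,e)=F_r$ and hence $\text{rank}_{p,e}$ on the interior to be $r<d$, contradicting that interior points have rank $d=\deg(p)$. So the stalling scenario is excluded, but making this rigorous requires care in relating "the cone is generated by extreme rays inside $F_r$" to "$F_r=\Lambda_+(p,e)$" — this uses that a proper cone equals the closed conical hull of its extreme rays, plus the fact that a face containing a generating set of the cone must be the whole cone. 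An alternative, possibly cleaner, route avoids explicit extreme-ray bookkeeping: take a maximal chain of nonempty faces $\{0\}=G_0\subsetneq G_1\subsetneq\cdots\subsetneq G_s=\Lambda_+(p,e)$ and argue that along a maximal chain the rank must increase by exactly one at each step — the rank-one-extreme-ray hypothesis is precisely what rules out rank jumps of size $\geq 2$, because a minimal face properly containing $G_{i-1}$ is obtained by adjoining a single extreme ray, which raises the rank by at most one. Then $s = \text{rank}(G_s)-\text{rank}(G_0) = d-0 = d$, so the chain has $d+1$ faces. I would likely present this second version, as it sidesteps the delicate construction and isolates the role of the hypothesis: the rank-one condition exactly prevents "skipping" ranks along maximal chains.
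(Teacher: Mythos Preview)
Your proposal is correct, and both variants you sketch work; the key ingredient---subadditivity of the hyperbolic rank together with the rank-one hypothesis---is the same as in the paper. However, the paper's argument is organized quite differently and is considerably shorter. Rather than building a chain face-by-face, the paper invokes the bound of Ito and Louren\c{c}o that the Carath\'eodory number of a closed pointed cone is at most $\ell(K)-1$: any relative-interior point can therefore be written as $\sum_{i=1}^{\kappa} x_i$ with each $x_i$ an extreme-ray generator and $\kappa \le \ell(\Lambda_+(p,e))-1$. Subadditivity of the hyperbolic rank then gives
\[
d \;=\; \textup{rank}_{p,e}\!\left(\sum_{i=1}^{\kappa} x_i\right) \;\le\; \sum_{i=1}^{\kappa}\textup{rank}_{p,e}(x_i) \;=\; \kappa \;\le\; \ell(\Lambda_+(p,e))-1,
\]
which together with the general upper bound $\ell(\Lambda_+(p,e))\le d+1$ finishes in one line. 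Your approach has the advantage of being self-contained (no appeal to the Carath\'eodory-number result) and of actually exhibiting the chain, but it is longer and your second variant leans on a lattice-theoretic fact you should make explicit: that in a maximal chain of faces of a pointed cone, each cover $G_{i-1}\subsetneq G_i$ is of the form $G_i = G_{i-1}\vee F(x)$ for a single extreme ray $x$. This is true (choose any extreme ray of $G_i$ not lying in $G_{i-1}$ and use maximality), but it deserves a sentence of justification rather than being asserted.
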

\begin{proof}
Let $x_1,x_2,\ldots,x_{\kappa}$ be a collection of generators of extreme rays such that $x_1+x_2+\cdots + x_{\kappa}$
is in the relative interior of $\Lambda_+(p,e)$. Since the Carath\'eodory number of $\Lambda_+(p,e)$ is bounded above by 
$\ell(\Lambda_+(p,e)) - 1$~\cite{ito2017bound}, it follows that we can choose $\kappa\leq \ell(\Lambda_+(p,e))-1$. 
Since the hyperbolic rank function is a nonnegative submodular function on the face lattice of the 
hyperbolicity cone~\cite{amini2018non}, the hyperbolic rank is subadditive. Then
\[ \ell(\Lambda_+(p,e)) -1 \leq d = \textup{rank}_{p,e}\left(\sum_{i=1}^{\kappa}x_i\right) \leq \sum_{i=1}^{\kappa}\textup{rank}_{p,e}(x_i)
= \kappa \leq \ell(\Lambda_+(p,e))-1.\]
\end{proof}
Symmetric cones\footnote{Self-dual conex cones for which the automorphism group acts
transitively on the interior.} are examples of hyperbolicity cones with $p$
being the determinant associated with the appropriate Euclidean Jordan algebra.
For these cones, all of the extreme rays have hyperbolic rank one.  As
such \cref{prop:hyprkone} generalizes~\cite[Theorem 14]{ito2017bound}. It also applies
to spectrahedra with all rank one extreme rays which, in the real symmetric
case, have been classified by Blekherman, Sinn, and
Velasco~\cite{blekherman2017sums}. 

We can obtain a more refined bound in cases where all the extreme rays 
have rank at least $r$, and so all nonzero faces have rank at least $r$. Then 
\[ \ell(\Lambda_+(p,e)) \leq d - r +2\]
since, in that case, there are no faces of rank $1,2,\ldots,r-1$. For hyperbolicity cones corresponding to 
strictly hyperbolic polynomials~\cite{nuij1969note} (which give rise to smooth hyperbolicity cones), all extreme rays
have rank $d-1$ so this bound becomes $\ell(\Lambda_+(p,e)) \leq 3$.

A hyperbolic polynomial is \emph{irreducible} if it cannot be factored as a product of  
polynomials of lower degree. If $p$ is hyperbolic with respect to $e$ and is reducible, 
we can write $p = p_1^{m_1}\cdots p_n^{m_n}$ where the $p_i$ are the irreducible factors of $p$
and the $m_i$ are positive integers. It is straightforward to see that the $p_i$ are also hyperbolic with respect to $e$. 
The hyperbolicity cone corresponding to $p$ is the intersection of the cones corresponding to the $p_i$. As such, 
\[ \Lambda_+(p,e) = \{x\;:\; (x,x,\ldots,x) \in  \Lambda_+(p_1,e)\times \cdots \times \Lambda_+(p_n,e)\}\]
gives a $\Lambda_+(p_1,e)\times \cdots \times \Lambda_+(p_n,e)$-lift of $\Lambda_+(p,e)$. 

The following corollary of \cref{thm:main} summarizes some of the discussion above, and highlights 
one of very few known obstructions to the existence of lifts using hyperbolicity cones. 
\begin{corollary}
\label{cor:hyp}
Suppose $C$ is a proper convex cone such that for any $N\in \mathbb{N}$ there is a subset
$V_N\subseteq \textup{ext}(C)$ such that $|V_N|\geq N$ and $C$ is   
$k$-neighborly with respect to $V_N$.
If $p$ is hyperbolic with respect to $e$ and all the irreducible components of $p$ have degree at most $k$, 
then $C$ does not have a $\Lambda_+(p,e)$-lift.
\end{corollary}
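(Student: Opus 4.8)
The plan is to reduce the statement directly to \cref{thm:main} by decomposing $\Lambda_+(p,e)$ into a product of the hyperbolicity cones of the irreducible factors of $p$. First I would write $p = p_1^{m_1}\cdots p_n^{m_n}$, where the $p_i$ are the distinct irreducible factors of $p$ and the $m_i$ are positive integers. As recalled just before the statement, each $p_i$ is hyperbolic with respect to $e$, and since the hyperbolicity cone depends only on the squarefree part of $p$ we have
\[
\Lambda_+(p,e) \;=\; \bigcap_{i=1}^{n} \Lambda_+(p_i,e) \;=\; \bigl\{x : (x,\ldots,x)\in \Lambda_+(p_1,e)\times\cdots\times\Lambda_+(p_n,e)\bigr\},
\]
which exhibits a $\Lambda_+(p_1,e)\times\cdots\times\Lambda_+(p_n,e)$-lift of $\Lambda_+(p,e)$ (take the linear subspace of \cref{def:lift} to be the diagonal and the linear map to be the projection onto one block).

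Next I would verify the two hypotheses of \cref{thm:main} for the cones $K_i := \Lambda_+(p_i,e)$. Each $K_i$ is a closed convex cone by G{\aa}rding's theorem~\cite{gaarding1959inequality}, and it has only short chains of faces: by Renegar's result~\cite{renegar2006hyperbolic} the hyperbolic rank with respect to $p_i$ strictly increases along chains of faces of $K_i$, while the maximum hyperbolic rank attained on $K_i$ is $\deg(p_i)$, so $\ell(K_i)\leq \deg(p_i)+1\leq k+1$ by assumption. On the other side, $C$ is by hypothesis $k$-neighborly with respect to arbitrarily large finite subsets of $\textup{ext}(C)$. Now suppose, for contradiction, that $C$ had a $\Lambda_+(p,e)$-lift, say $C=\pi(\Lambda_+(p,e)\cap L)$, and write $\Lambda_+(p,e)=\rho(K\cap L')$ with $K=K_1\times\cdots\times K_n$ as above. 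Then $C=(\pi\circ\rho)\bigl(K\cap (L'\cap\rho^{-1}(L))\bigr)$ is a $K$-lift of $C$, since $\pi\circ\rho$ is linear and $L'\cap\rho^{-1}(L)$ is again a linear subspace. This contradicts \cref{thm:main} applied with these $K_1,\ldots,K_n$, so $C$ has no $\Lambda_+(p,e)$-lift.

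I do not expect a genuine obstacle here: once the ingredients assembled in \cref{sec:con} are in hand — the neighborliness examples, the bound $\ell(\Lambda_+(\cdot,e))\leq \deg(\cdot)+1$, and the product-lift decomposition of a reducible hyperbolicity cone — the argument is essentially bookkeeping. The only two points that warrant a sentence of care are that passing from $p$ to its squarefree part leaves the hyperbolicity cone unchanged, and that the composition of two lifts in the sense of \cref{def:lift} is again a lift of that form (the latter amounting to the observation that $L'\cap\rho^{-1}(L)$ is a linear subspace and $\pi\circ\rho$ is linear).
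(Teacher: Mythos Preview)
Your argument is correct and matches the paper's approach exactly: the paper does not give a separate proof of \cref{cor:hyp} but simply states that it ``summarizes some of the discussion above,'' namely the product-lift decomposition $\Lambda_+(p,e)=\rho\bigl((\Lambda_+(p_1,e)\times\cdots\times\Lambda_+(p_n,e))\cap L'\bigr)$ together with the bound $\ell(\Lambda_+(p_i,e))\leq \deg(p_i)+1\leq k+1$, after which \cref{thm:main} applies. Your explicit verification that lifts compose (so that a $\Lambda_+(p,e)$-lift of $C$ yields a $K_1\times\cdots\times K_n$-lift) is the one detail the paper leaves implicit, and your treatment of it is correct.
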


\section{Technical preliminaries}
\label{sec:pre}
In this section we briefly introduce the additional definitions, and 
basic technical results, needed for our proof of \cref{thm:main}. 

\subsection{Convex cones and their faces}

The \emph{lineality space} $\textup{Lin}(C)$ of a closed convex cone is $\textup{Lin}(C) := C \cap (-C)$, and is the largest
subspace contained in $C$. A closed convex cone $C$ is pointed if and only if $\textup{Lin}(C) = \{0\}$. 
A closed convex cone can always be expressed as $C = C' + \textup{Lin}(C)$ where $C'$ is pointed and the sum is direct 
(for instance we can take $C' = C \cap \textup{Lin}(C)^\perp$).

If $C\subseteq \RR^n$ is a closed convex cone and 
$X\subseteq C$, we denote by $F_C(X)$ the smallest (inclusion-wise) face of $C$ containing $X$.  
If $x\in C$, we write $F_C(x)$ instead of $F_C(\{x\})$ for the smallest (inclusion-wise) face of $C$ containing $x$. 
If $C$ is clear from the context, we omit it from the notation.

The collection of faces of a proper convex cone $C$, 
partially ordered by inclusion, form a lattice~\cite{barker1973lattice}.
The lattice operations are 
\[ F_1\wedge F_2 = F_1 \cap F_2\quad\textup{and}\quad
F_1\vee F_2 = F_C(F_1 \cup F_2).\]

We summarize some useful properties of this operation for future reference.
\begin{lemma}
\label{lem:face-arithmetic}
Let $C$ be a proper convex cone.
\begin{enumerate}
	\item If $F$ is a face of $C$ and $F\subsetneq C$ then $\dim(F) < \dim(C)$. 
	\item If $x\in C$ and $F$ is a face of $C$ then $F_C(x) = F$ if and only if $x\in \textup{relint}(F)$.
	\item If $\lambda > 0$ and $x\in C$ then $F_C(\lambda x) = F_C(x)$.
	\item If $x,y\in C$ then $F_C(x+y) = F_C(x)\vee F_C(y)$.
	\item If $x_1,\ldots,x_n\in C$ and $\lambda_1,\ldots,\lambda_n > 0$ then 
\[ F_C\left(\sum_{i=1}^{n}\lambda_i x_i\right) = \bigvee_{i=1}^{n}F_C(\lambda_ix_i) = \bigvee_{i=1}^{n}F_C(x_i) = 
F_C\left(\sum_{i=1}^{n}x_i\right).\]
\end{enumerate}
\end{lemma}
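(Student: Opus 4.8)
The plan is to reduce the whole lemma to two ingredients: the structural fact that every point of a closed convex set lies in the relative interior of a unique face (which also gives part~1, e.g.\ via \cite[Corollary~18.1.3]{rockafellar2015convex}), together with the observation that a face of a convex cone is itself a convex cone. I would establish the latter first. If $F$ is a face of $C$, $x\in F$, and $\lambda>0$, then one can write $x$ as a convex combination $x=\alpha(\lambda x)+(1-\alpha)w$ with $w\in C$ and $\alpha\in(0,1)$: take $w=0$ and $\alpha=1/\lambda$ when $\lambda>1$, and take $\alpha$ arbitrary in $(0,1)$ and $w=\tfrac{1-\alpha\lambda}{1-\alpha}x\in C$ when $\lambda\le 1$. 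Since this combination lies in $F$ and its endpoints lie in $C$, the defining property of a face forces $\lambda x\in F$. Part~3 is then immediate: $F_C(\lambda x)$ is a face containing $\lambda^{-1}(\lambda x)=x$ while $F_C(x)$ is a face containing $\lambda x$, so the two smallest such faces coincide.

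For part~2, suppose first that $x\in\mathrm{relint}(F)$. Any face $G$ of $C$ with $x\in G\subseteq F$ is also a face of $F$, and a face of $F$ that meets $\mathrm{relint}(F)$ must equal $F$: given $y\in F$, one can extend the segment from $y$ through $x$ slightly beyond $x$ to get $z\in F$ and $\alpha\in(0,1)$ with $x=\alpha y+(1-\alpha)z$, whence $y,z\in G$; thus $F\subseteq G$. Hence $F_C(x)=F$. Conversely, if $x\notin\mathrm{relint}(F)$, then $x$ lies in the relative interior of some proper face $F'$ of $F$; since a face of a face is a face, $F'$ is also a face of $C$ with $x\in F'\subsetneq F$, so $F_C(x)\subseteq F'\subsetneq F$ and $F_C(x)\neq F$.

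Part~4 splits into two inclusions. Because $x+y=\tfrac12(2x)+\tfrac12(2y)$ with $2x,2y\in C$, every face containing $x+y$ contains $2x$ and $2y$, hence (faces are cones) contains $x$ and $y$; applying this to $F_C(x+y)$ gives $F_C(x),F_C(y)\subseteq F_C(x+y)$, so $F_C(x)\vee F_C(y)\subseteq F_C(x+y)$. In the other direction, $F_C(x)\vee F_C(y)$ contains $x$ and $y$ and, being a convex cone, contains $x+y$, so it contains $F_C(x+y)$. Part~5 then follows by induction on $n$: using associativity of $\vee$ (the faces of $C$ form a lattice \cite{barker1973lattice}), part~4 to split off one summand at a time, and part~3 to discard the scalars; setting all $\lambda_i=1$ gives the final equality.

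I do not expect a serious obstacle here; the arguments are elementary manipulations with the face axiom and the cone structure. The one point that needs care is part~2, which rests on the decomposition of the relative boundary of a face into proper subfaces (equivalently, the uniqueness of the face whose relative interior contains a given point), and there I would invoke the relevant structural result about faces of convex sets rather than re-deriving it.
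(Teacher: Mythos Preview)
Your proof is correct. The overall strategy matches the paper's: both rely on the facts that faces are closed under positive scaling, that a face meeting the relative interior of another face must contain it, and that the face generated by a sum is the join of the faces generated by the summands. The difference is purely in presentation. For part~2 the paper cites \cite[Corollary~18.1.2]{rockafellar2015convex} and \cite[Lemma~2.9]{barker1973lattice} for the two directions, whereas you argue them directly (the segment-extension argument and the face decomposition of the relative boundary). For part~4 the paper simply invokes \cite[Corollary~1]{barker1975cones}, while you give the elementary two-inclusion proof via $x+y=\tfrac12(2x)+\tfrac12(2y)$ together with the ``faces are cones'' observation you established at the outset. Your version is more self-contained; the paper's is shorter by deferring to the literature. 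Mathematically the content is the same.
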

\begin{proof}
The first statement is~\cite[Corollary 18.1.3]{rockafellar2015convex}. 
For the second statement, suppose that $x\in \textup{relint}(F)$. 
Then $F$ is a face of $C$ that contains $x$, so $F_C(x)\subseteq F$. Moreover, $x\in F_C(x)\cap \textup{relint}(F)$ so, 
by~\cite[Corollary 18.1.2]{rockafellar2015convex}, $F \subseteq F_C(x)$. Now assume that $F_C(x) = F$. Then, 
by~\cite[Lemma 2.9]{barker1973lattice}, $x\in \textup{relint}(F_C(x)) = \textup{relint}(F)$.
The third statement follows from the fact that if $F$ is a face of $C$ then $x\in F$ if and only if $\lambda x \in F$.
The fourth statement is a special case of~\cite[Corollary 1]{barker1975cones}.
The fifth statement follows from statements 3 and 4. 
\end{proof}

\subsection{Cone lifts and the factorization theorem}
We begin with the notion of a proper\footnote{The notion of a proper lift of a closed convex cone 
(from \cref{def:proper-lift}) is quite distinct from the notion of a proper (i.e., closed, pointed, full-dimensional) 
convex cone. The distinction between these (standard) uses should be clear from the context.}
lift, a slight refinement of \cref{def:lift}.
\begin{definition}
\label{def:proper-lift}

If $C\subseteq \RR^n$ and $K\subseteq\RR^{d}$ are closed convex cones then $C$ has a \emph{proper $K$-lift} if
$C = \pi(K \cap L)$
where $\pi:\RR^d\rightarrow \RR^n$ is a linear map and $L\subseteq \RR^d$ is a linear subspace that meets the 
relative interior of $K$.
\end{definition}
The factorization theorem~\cite[Theorem 1]{gouveia2013lifts}
of Gouveia, Parrilo, and Thomas, allows us to reformulate geometric questions about the existence of cone lifts 
as algebraic questions about the existence of factorizations of certain non-negative operators. 
Here we state and prove (for completeness) only the direction of the result of Gouveia, Parrilo, and Thomas that we need,
and express it in a  modified form that is most convenient for our subsequent use. 
\begin{theorem}[{Factorization theorem~\cite[Theorem 1]{gouveia2013lifts}}]
\label{thm:gpt}
If a proper convex cone  $C$ has a proper $K_1\times \cdots\times K_m$-lift then, for $i=1,2,\ldots,m$, 
there are maps 
$a_i: C^* \rightarrow K_i^*$
and 
$b_i: C \rightarrow K_i$ such that 
$\langle x,y\rangle = \sum_{i=1}^{m} \langle b_i(x),a_i(y)\rangle$
for all $(x,y)\in C\times C^*$. 
\end{theorem}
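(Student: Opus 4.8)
The plan is to prove the factorization theorem directly from the definition of a proper $K_1 \times \cdots \times K_m$-lift. Suppose $C = \pi(K \cap L)$ where $K = K_1 \times \cdots \times K_m$, $\pi : \RR^d \to \RR^n$ is linear, and $L$ is a linear subspace meeting $\textup{relint}(K)$. First I would show that the lift gives, for each $x \in C$, a point $\phi(x) \in K \cap L$ with $\pi(\phi(x)) = x$; writing $\phi(x) = (\phi_1(x), \ldots, \phi_m(x))$ componentwise with $\phi_i(x) \in K_i$ defines candidate maps $b_i := \phi_i : C \to K_i$. The dual side is where the real content lies: I would take the adjoint $\pi^* : \RR^n \to \RR^d$, and for $y \in C^*$ analyze $\pi^*(y)$ relative to $L$ and $K^*$. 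The key identity to establish is that for $y \in C^*$, the functional $\pi^*(y)$, after projecting onto $L$ (equivalently, modulo $L^\perp$), lies in $(K \cap L)^* $, and by the properness assumption (that $L$ meets $\textup{relint}(K)$) one has $(K \cap L)^* = K^* + L^\perp$, so there is some $a(y) \in K^*$ with $\pi^*(y) - a(y) \in L^\perp$. Then splitting $a(y) = (a_1(y), \ldots, a_m(y))$ with $a_i(y) \in K_i^*$ gives the maps $a_i : C^* \to K_i^*$.

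The computation that ties it together: for $(x,y) \in C \times C^*$, write $x = \pi(b(x))$ with $b(x) = (b_1(x), \ldots, b_m(x)) \in K \cap L$, so
\[
\langle x, y \rangle = \langle \pi(b(x)), y \rangle = \langle b(x), \pi^*(y) \rangle = \langle b(x), a(y) \rangle + \langle b(x), \pi^*(y) - a(y) \rangle.
\]
The last term vanishes because $b(x) \in L$ while $\pi^*(y) - a(y) \in L^\perp$. The first term splits as $\sum_{i=1}^m \langle b_i(x), a_i(y) \rangle$ by the product structure of the inner product on $\RR^d = \RR^{d_1} \times \cdots \times \RR^{d_m}$. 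This is exactly the claimed factorization. I would also need the basic fact that $a(y) \in K^* = K_1^* \times \cdots \times K_m^*$, i.e.\ that the dual of a Cartesian product of cones is the Cartesian product of the dual cones — this is standard and immediate from the definition of the dual cone.

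The main obstacle, and the only step requiring care, is the duality identity $(K \cap L)^* = K^* + L^\perp$ (closure of the sum), and more precisely the fact that one can actually \emph{select} $a(y) \in K^*$ with $\pi^*(y) - a(y) \in L^\perp$ for every $y \in C^*$ — that is, that the sum $K^* + L^\perp$ is already closed, not merely its closure. This is precisely where the hypothesis that $L$ meets $\textup{relint}(K)$ (the "properness" of the lift) is used: under a relative-interior transversality condition of this kind, standard convex-analysis results (e.g.\ \cite[Corollary 16.4.2]{rockafellar2015convex} on the dual of an intersection, together with the fact that no closure operation is needed when the relative interiors intersect) guarantee both the identity and that $K^* + L^\perp$ is closed. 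Once that is in hand, everything else is routine linear algebra — taking adjoints, using $\langle \pi(u), y\rangle = \langle u, \pi^*(y)\rangle$, and unpacking the product inner product. Note we do not need any converse direction or any claim that the maps $a_i, b_i$ are linear or continuous; the theorem as stated only asserts existence of maps satisfying the bilinear identity.
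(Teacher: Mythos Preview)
Your proposal is correct and follows essentially the same approach as the paper: choose a preimage $b(x)\in K\cap L$ for each $x\in C$, use the properness hypothesis together with \cite[Corollary~16.4.2]{rockafellar2015convex} to write $\pi^*(y)=a(y)+w(y)$ with $a(y)\in K^*$ and $w(y)\in L^\perp$, then compute $\langle x,y\rangle=\langle b(x),a(y)\rangle$ and split over the factors. The only minor wrinkle is your remark about ``projecting $\pi^*(y)$ onto $L$'': no projection is needed, since $\pi^*(y)\in (K\cap L)^*$ directly (for any $z\in K\cap L$, $\langle z,\pi^*(y)\rangle=\langle \pi(z),y\rangle\geq 0$).
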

\begin{proof}
Since $C$ has a proper $K:= K_1\times \cdots \times K_m$-lift there is a
subspace $L$ that meets the relative interior of $K$ such that $C =\pi(K \cap
L)$.  For each $x\in C$ define $b(x)$ to be an arbitrary choice of element of
$K \cap L$ such that $\pi(b(x)) = x$. 

Since $C = \pi(K\cap L)$ and $L$ meets the relative interior of $K$, 
it follows from~\cite[Corollary 16.4.2]{rockafellar2015convex} 
that $\pi^*(C^*)\subseteq (K \cap L)^* = K^* + L^\perp$.
So, for each $y\in C^*$,
there exists $a(y)\in K^*$ and $w(y)\in L^\perp$ such that $\pi^*(y) = a(y) + w(y)$. 
Then 
\[ \langle x,y\rangle = \langle \pi(b(x)),y\rangle = \langle b(x),\pi^*(y)\rangle = \langle b(x),a(y) + w(y)\rangle = \langle b(x),a(y)\rangle\]
since $b(x)\in L$ and $w(y)\in L^\perp$. If we let the $a_i$ and $b_i$ be the associated coordinate functions of $a$ and $b$, the 
result follows. 
\end{proof}
The following result means we can replace general lifts with proper lifts when we prove \cref{thm:main},
allowing us to use the factorization theorem.
\begin{lemma}
\label{lem:proper-ced}
	Suppose that $C$ is a proper convex cone and there exist a positive integer $m$ and 
	closed convex cones $\tilde{K}_1,\tilde{K}_2,\ldots,\tilde{K}_m$ such that $C$ has 
	a $\tilde{K}_1\times \cdots \times \tilde{K}_m$-lift and 
	$\ell(\tilde{K}_i) \leq k+1$ for all $i\in [m]$. Then there exist a positive integer $m$ and closed convex cones 
	$K_1,K_2,\ldots,K_m$ 
	such that $C$ has a proper $K_1\times \cdots \times K_m$-lift and $\ell(K_i) \leq k+1$ for all $i\in [m]$.
\end{lemma}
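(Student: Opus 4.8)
The plan is to promote a general $\tilde K_1\times\cdots\times\tilde K_m$-lift of $C$ to a proper one by passing to a suitable face of $\tilde K := \tilde K_1\times\cdots\times\tilde K_m$, exploiting that faces of a Cartesian product of cones are themselves Cartesian products of faces. Concretely, suppose $C = \pi(\tilde K\cap L)$, write $S := \tilde K\cap L$ (a closed convex cone), and fix a point $z\in\textup{relint}(S)$. Let $F := F_{\tilde K}(z)$ be the smallest face of $\tilde K$ containing $z$, so that $z\in\textup{relint}(F)$ by \cref{lem:face-arithmetic}. The key step is the identity $S = F\cap L$. The inclusion $F\cap L\subseteq \tilde K\cap L = S$ is immediate from $F\subseteq\tilde K$; for $S\subseteq F$ I would argue as follows. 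Given $s\in S$, since $z\in\textup{relint}(S)$ there is $\mu > 1$ with $\mu z - (\mu-1)s\in S\subseteq \tilde K$, and then
\[
  \tfrac{1}{2}\mu z = \tfrac{1}{2}\bigl(\mu z - (\mu-1)s\bigr) + \tfrac{1}{2}(\mu-1)s
\]
exhibits the element $\tfrac{1}{2}\mu z\in F$ as a midpoint of two elements of $\tilde K$; since $F$ is a face it follows that $(\mu-1)s\in F$, hence $s\in F$ because faces are cones. Thus $C = \pi(S) = \pi(F\cap L)$, and since $z\in L\cap\textup{relint}(F)$ this is a proper $F$-lift of $C$.

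To put this lift in product form, I would invoke the standard fact (readily deduced from \cref{lem:face-arithmetic} applied coordinatewise) that every face of $\tilde K_1\times\cdots\times\tilde K_m$ has the form $F_1\times\cdots\times F_m$ with each $F_i$ a face of $\tilde K_i$. Applying this to $F$ and setting $K_i := F_i$ gives closed convex cones with $K_1\times\cdots\times K_m = F$, so $C$ has a proper $K_1\times\cdots\times K_m$-lift. For the chain-length bound, each $K_i$ is a face of $\tilde K_i$: if $K_i = \tilde K_i$ then $\ell(K_i) = \ell(\tilde K_i)\le k+1$, and if $K_i\subsetneq\tilde K_i$ then the monotonicity of $\ell(\cdot)$ along chains of faces gives $\ell(K_i) < \ell(\tilde K_i)\le k+1$; either way $\ell(K_i)\le k+1$, as required.

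The only points that genuinely need checking are the two auxiliary facts used above. One is that the $K_i$ really are \emph{closed}: a (convex) face $F$ of a closed convex cone $\tilde K$ is closed, because if $x\in\textup{relint}(F)$ and $y\in\overline{F}\subseteq\tilde K$ then the half-open segment $[x,y)$ lies in $\textup{relint}(\overline{F}) = \textup{relint}(F)\subseteq F$, so in particular $\tfrac{1}{2}(x+y)\in F$, and the face property applied to $x,y\in\tilde K$ then forces $y\in F$. The other is the product-of-faces description, which is elementary. I expect the only place where any care is needed is the identity $S = F\cap L$; once that is in hand the rest is formal manipulation of faces, relative interiors, and the monotonicity of $\ell(\cdot)$.
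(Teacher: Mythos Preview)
Your proof is correct and follows essentially the same route as the paper: both pass from the given lift to the minimal face $F$ of $\tilde K$ containing $\tilde K\cap L$ (your $F_{\tilde K}(z)$ for $z\in\textup{relint}(\tilde K\cap L)$ is exactly that face), decompose $F$ as a product of faces $F_1\times\cdots\times F_m$, and invoke monotonicity of $\ell(\cdot)$. You supply more detail than the paper---in particular an explicit verification that $\tilde K\cap L = F\cap L$ and that $L$ meets $\textup{relint}(F)$, and a check that faces of closed cones are closed---whereas the paper simply asserts the proper $F_1\times\cdots\times F_m$-lift exists and cites \cite{barker1978perfect} for the product structure of faces.
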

\begin{proof}
	By our assumption on $C$, there is a linear map $\pi$ and a subspace $L$ such that
	$C = \pi(L \cap (\tilde{K}_1\times \cdots \times \tilde{K}_m))$. If $L$ meets the relative interior of 
	$\tilde{K}_1\times \cdots \times \tilde{K}_{m}$ we simply take $K_i = \tilde{K}_i$ for all $i\in [m]$. 
	 If $L$ does not intersect the relative interior of $\tilde{K}_1\times \cdots \times \tilde{K}_{m}$
	then let $F$ denote the minimal face of $\tilde{K}_1\times \cdots \times \tilde{K}_{m}$
	 containing $(\tilde{K}_1\times \cdots \times \tilde{K}_{m})\cap L$.
	Since $F$ is a face of $\tilde{K}_1\times \cdots \times \tilde{K}_m$ it has the form $F_1\times \cdots \times F_m$ 
	where $F_i$ is a face
	of $\tilde{K}_i$ for all $i\in [m]$~\cite[Theorem 2]{barker1978perfect}. 
	Moreover, we have that $\ell(F_i) \leq \ell(\tilde{K}_i)$ for all $i\in [m]$. As such, $C$ has a proper
	$F_1\times F_2\times \cdots \times F_m$-lift and $\ell(F_i) \leq k+1$ for all $i\in [m]$. Taking $K_i = F_i$ for all $i\in [m]$ 
	completes the proof.
\end{proof}	

\subsection{Ramsey numbers}
The key combinatorial result we use is Ramsey's theorem for hypergraphs. 
We formally state it in language more suited to our application.
\begin{theorem}[{\cite[Theorem B]{ramsey1930problem}}]
Let $k$, $m$, and $n$ be positive integers. There exists a positive integer $R_k(m;n)$ such that  
whenever $V$ is a set with $|V|\geq R_k(m;n)$ and $f:\binom{V}{k}\rightarrow [n]$ then there exists 
$W\subseteq V$ such that $|W|=m$ and $f(S) = f(T)$ for all $S,T\in \binom{W}{k}$.
\end{theorem}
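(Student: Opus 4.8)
\subsection*{Proof proposal}

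The plan is to prove this classical fact by induction on the uniformity $k$, with the pigeonhole principle handling the base case and a greedy ``link colouring'' argument driving the inductive step; the bounds $R_k(m;n)$ will emerge from an explicit backward recursion. For the base case $k=1$, a map $f:\binom{V}{1}\to[n]$ is just an $n$-colouring of the points of $V$, and pigeonhole shows that $|V|\geq n(m-1)+1$ forces some colour class of size at least $m$, so one may take $R_1(m;n):=n(m-1)+1$.

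For the inductive step I would assume $R_{k-1}(\cdot\,;\cdot)$ exists, fix targets $m$ and $n$, set $t:=n(m-1)+1$, and define sizes $N_t:=1$ and $N_{i-1}:=R_{k-1}(N_i;n)+1$ for $i=t,t-1,\dots,1$, with the claim that $R_k(m;n):=N_0$ works. Given $V$ with $|V|\geq N_0$, write $V_0:=V$ and build vertices $a_1,\dots,a_t$ together with nested sets $V_0\supseteq V_1\supseteq\cdots\supseteq V_t$ recursively: having produced $V_{i-1}$ with $|V_{i-1}|\geq N_{i-1}$, pick any $a_i\in V_{i-1}$ and apply the inductive hypothesis to the $(k-1)$-uniform colouring $S\mapsto f(S\cup\{a_i\})$ on $V_{i-1}\setminus\{a_i\}$ (which has at least $N_{i-1}-1=R_{k-1}(N_i;n)$ elements) to obtain $V_i\subseteq V_{i-1}\setminus\{a_i\}$ with $|V_i|\geq N_i$ on which that colouring is constant, with value $c_i\in[n]$. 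The $a_i$ are automatically distinct, since $a_i\notin V_i$ while $a_j\in V_{j-1}\subseteq V_i$ for every $j>i$.

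The one genuine idea is the observation that, for any indices $i_1<i_2<\cdots<i_k$ in $[t]$, the vertices $a_{i_2},\dots,a_{i_k}$ all lie in $V_{i_1}$, so that $f(\{a_{i_1},a_{i_2},\dots,a_{i_k}\})=c_{i_1}$; in other words, the $f$-colour of a $k$-subset of $\{a_1,\dots,a_t\}$ depends only on its least index. Applying the base case to the colouring $i\mapsto c_i$ of $[t]$ (legitimate since $t=R_1(m;n)$) produces $I\in\binom{[t]}{m}$ on which $c_i$ is constant, and then $W:=\{a_i:i\in I\}$ has $|W|=m$ and $f$ constant on $\binom{W}{k}$, as required. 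I expect the only fiddly part to be the bookkeeping of the backward recursion and the accompanying size inequalities; the conceptual heart is the ``colour equals least index'' reduction of the $k$-uniform problem on $\{a_1,\dots,a_t\}$ to a $1$-uniform one, which is exactly what lets the induction on $k$ close.
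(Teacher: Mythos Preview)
The paper does not supply a proof of this theorem at all; it is quoted as a classical result with a citation to Ramsey's 1930 paper and then used as a black box in the proof of \cref{lem:js-main}. So there is no ``paper's proof'' to compare against. Your argument is the standard proof of the hypergraph Ramsey theorem by induction on the uniformity $k$, and it is correct in outline and in substance.

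One minor bookkeeping point, exactly of the kind you anticipated: with $N_t:=1$ and $k\geq 3$, the set $V_t$ may have fewer than $k-1$ elements, so $\binom{V_t}{k-1}=\emptyset$ and the phrase ``constant, with value $c_t\in[n]$'' is not literally meaningful; the same can happen for the last few indices. This does no real damage, because the least index $i_1$ of any $k$-subset of $\{a_1,\dots,a_t\}$ satisfies $i_1\leq t-k+1$, and your recursion gives $N_{t-j}=j+1$ for $0\leq j\leq k-2$ (using $R_{k-1}(m;n)=m$ when $m\leq k-1$), so $N_i\geq k-1$ and $c_i$ is genuinely defined for every $i\leq t-k+2$. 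You can therefore either assign arbitrary colours to the finitely many undefined $c_i$'s before invoking pigeonhole (harmless, since those indices can never occur as a least index), or simply initialise $N_t:=k-1$ so the issue never arises. Either tweak closes the argument cleanly.
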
 
In the hypergraph setting we think of $f$ as a coloring of the $k$-uniform hypergraph on vertex set $V$ with $n$ colors, 
and $W$ as a subset of $m$ vertices of the hypergraph for which all induced hyperedges have the same color.
Except in a few very special cases, the numbers $R_k(m;n)$ are not known, although explicit upper bounds
are available. For our purposes, we only need the fact that $R_{k}(m;n)$ is finite for positive integers $n, k$, and $m$. 

\section{Generalizing Averkov's lemmas and the proof of \cref{thm:main}}
\label{sec:pf}
In this section we establish \cref{thm:main} by generalizing the key technical arguments of~\cite{averkov2019optimal}.
Crucial to Averkov's approach is the following simple result about positive semidefinite matrices. In the statement, if $X$
is a positive semidefinite matrix, let $\textup{col}(X)$ denote its column space, and interpret the empty sum as zero, i.e., 
$\sum_{i\in \emptyset}X_i = 0$.
\begin{lemma}
\label{lem:averkov-subset}
Suppose $X_1,X_2,\ldots,X_\ell\psd 0$ and $\textup{rank}(\sum_{i\in[n]}X_i) = k$. Then 
there exists a subset $I\subseteq [n]$ with $|I|\leq k$ such that $\textup{col}\left(\sum_{i\in I}X_i\right) = 
\textup{col}\left(\sum_{i\in [n]} X_i\right)$.
\end{lemma}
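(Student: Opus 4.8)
The plan is to build the subset $I$ greedily, adding indices one at a time and tracking the column space of the partial sum. Since each $X_i$ is positive semidefinite, for any subset $J \subseteq [n]$ one has $\textup{col}\left(\sum_{i \in J} X_i\right) = \sum_{i \in J} \textup{col}(X_i)$: the inclusion $\subseteq$ is clear, and the reverse holds because for psd matrices there is no cancellation — if $v^T X_i v > 0$ for some $i$ then $v^T \left(\sum_{i \in J} X_i\right) v > 0$, so a vector orthogonal to $\textup{col}\left(\sum_{i \in J} X_i\right)$ is orthogonal to every $\textup{col}(X_i)$. In particular $\textup{col}(X_i) \subseteq \textup{col}\left(\sum_{i \in [n]} X_i\right) =: U$ for every $i$, and $U = \sum_{i \in [n]} \textup{col}(X_i)$.

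With that observation in hand, I would run the following greedy procedure. Start with $I_0 = \emptyset$, so that $\textup{col}\left(\sum_{i \in I_0} X_i\right) = \{0\}$ has dimension $0$. Given $I_t$ with $\textup{col}\left(\sum_{i \in I_t} X_i\right) \subsetneq U$, there must exist some index $j \notin I_t$ with $\textup{col}(X_j) \not\subseteq \textup{col}\left(\sum_{i \in I_t} X_i\right)$ — otherwise every $\textup{col}(X_i)$ would lie in $\textup{col}\left(\sum_{i \in I_t} X_i\right)$, forcing $U = \sum_{i \in [n]} \textup{col}(X_i) \subseteq \textup{col}\left(\sum_{i \in I_t} X_i\right)$, a contradiction. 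Set $I_{t+1} = I_t \cup \{j\}$. Then $\textup{col}\left(\sum_{i \in I_{t+1}} X_i\right) = \textup{col}\left(\sum_{i \in I_t} X_i\right) + \textup{col}(X_j)$ strictly contains $\textup{col}\left(\sum_{i \in I_t} X_i\right)$, so its dimension strictly increases by at least one at each step.

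Since the dimension starts at $0$, strictly increases each step, and is bounded above by $\dim U = \textup{rank}\left(\sum_{i \in [n]} X_i\right) = k$, the procedure terminates after at most $k$ steps with a set $I = I_T$, $|I| = T \leq k$, satisfying $\textup{col}\left(\sum_{i \in I} X_i\right) = U$. (If $k = 0$ then $U = \{0\}$ and $I = \emptyset$ works, consistent with the empty-sum convention.)

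The argument is entirely routine; there is no real obstacle. The only point that needs a line of justification rather than a bare assertion is the no-cancellation fact $\textup{col}\left(\sum_{i \in J} X_i\right) = \sum_{i \in J} \textup{col}(X_i)$ for psd summands, which follows from positive semidefiniteness as sketched above, and everything else is a dimension count. I note that this lemma is exactly the linear-algebraic heart of Averkov's argument, and it is precisely the statement that will be abstracted away in the sequel: "column space" becomes "smallest face $F_C(\cdot)$", "rank" becomes the position in a chain of faces, and the monotonicity of dimension used here becomes the monotonicity of $\ell(\cdot)$ along chains of faces recorded in \cref{lem:face-arithmetic}.
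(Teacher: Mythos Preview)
Your proof is correct. Note, however, that the paper does not itself prove \cref{lem:averkov-subset}; it is quoted as Averkov's result and then generalized in \cref{lem:js-subset}, whose proof is the one to compare against.

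Your argument is greedy and bottom-up: start from $I=\emptyset$ and add indices one at a time, each addition strictly increasing $\dim\textup{col}(\sum_{i\in I}X_i)$, so at most $k$ steps suffice. The paper's proof of the generalization is instead top-down and inductive on $\ell(K)$: take an inclusion-minimal $I$ with $F(\sum_{i\in I}x_i)=K$, remove any $j\in I$, observe that the remaining sum lies in a strictly smaller face, apply the inductive hypothesis there to get $|I\setminus\{j\}|\leq \ell(K)-2$, and conclude $|I|\leq \ell(K)-1$. Both arguments hinge on the same monotonicity (dimension, respectively $\ell$, strictly increases along a chain of faces), and your greedy version would transport to the face-lattice setting just as well. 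The inductive phrasing has the mild advantage that it makes the role of $\ell(K)$ as the controlling parameter explicit and avoids having to argue separately that some new $x_j$ enlarges the current face; your version is more constructive and arguably cleaner in the concrete PSD case. Your closing remark correctly anticipates exactly this abstraction.
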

We note that Averkov states the conclusion in an equivalent form  
as $\sum_{i\in I}\textup{col}(X_i) = \sum_{i\in [n]}\textup{col}(X_i)$. The statement given here suggests, more clearly, 
the generalization we require. 

\Cref{lem:averkov-subset} can be generalized to the setting in which the rank is 
replaced with the largest length of a chain of nonempty faces (minus one), and the column space is 
replaced with the minimal face containing a point of the convex cone. In the statement and proof
of \cref{lem:js-subset} we interpret the empty sum as zero, i.e., $\sum_{i\in \emptyset}x_i = 0$.
\begin{lemma}
\label{lem:js-subset}
Let $K$ be a closed 
convex cone and suppose that $x_1,x_2,\ldots,x_{n}\in K$ are such 
that $\sum_{i=1}^{n}x_i\in \textup{relint}(K)$. Then there exists 
$I\subseteq [n]$ with $|I|\leq \ell(K)-1$ such that $F(\sum_{i\in I}x_i) = F(\sum_{i\in [n]}x_i) = K$.
\end{lemma}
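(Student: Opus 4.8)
The plan is to mimic the proof of \cref{lem:averkov-subset}, but with faces in place of column spaces and $\ell(K)-1$ in place of rank. The key structural tool is item 5 of \cref{lem:face-arithmetic}: for any positive coefficients, $F_K\!\left(\sum_{i\in S}x_i\right) = \bigvee_{i\in S}F_K(x_i)$, so the minimal face of a positive sum depends only on which terms are included, not on the coefficients, and it behaves like a join (equivalently, a ``span'') over the face lattice. Combined with monotonicity of $\ell(\cdot)$ — if $F\subsetneq F'$ are faces then $\ell(F)<\ell(F')$ — this lets us run a greedy argument that strictly increases a face (hence decreases the ``codimension'' budget $\ell(K)-1-\ell(\text{current face})$, or rather increases $\ell$) at each step.

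First I would set $G := F_K\!\left(\sum_{i\in[n]}x_i\right)$. Since $\sum_{i\in[n]}x_i\in\textup{relint}(K)$, item 2 of \cref{lem:face-arithmetic} gives $G = K$, establishing the second claimed equality for free; it remains to find a small subset $I$ with $F_K\!\left(\sum_{i\in I}x_i\right) = K$. I would build $I$ greedily: start with $I_0 = \emptyset$, so $F_K\!\left(\sum_{i\in I_0}x_i\right) = F_K(0)$, which is the minimal face of $K$ (the lineality space $\textup{Lin}(K)$, or $\{0\}$ if $K$ is pointed). At stage $t$, if $F_K\!\left(\sum_{i\in I_t}x_i\right) = K$ we stop and output $I = I_t$. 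Otherwise $F_t := F_K\!\left(\sum_{i\in I_t}x_i\right) \subsetneq K$, and I claim some $j\in[n]\setminus I_t$ has $F_K(x_j)\not\subseteq F_t$: if instead $F_K(x_j)\subseteq F_t$ for all $j\in[n]$, then by item 5 of \cref{lem:face-arithmetic} the face $F_K\!\left(\sum_{i\in[n]}x_i\right) = \bigvee_{i=1}^n F_K(x_i)$ would be contained in $F_t\subsetneq K$, contradicting $G=K$. (Note $F_K(x_j)\subseteq F_t$ already holds for $j\in I_t$, so the witness $j$ lies outside $I_t$.) Set $I_{t+1} = I_t\cup\{j\}$. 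Then by item 5, $F_{t+1} = F_t \vee F_K(x_j) \supsetneq F_t$ — it strictly contains $F_t$ because it contains $F_K(x_j)\not\subseteq F_t$, while $F_t\subseteq F_{t+1}$ by definition of the join.

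So along the greedy run we get a strictly increasing chain of nonempty faces $F_0 \subsetneq F_1 \subsetneq \cdots \subsetneq F_{|I|} = K$, which has length $|I|+1$, hence $|I|+1 \leq \ell(K)$, i.e.\ $|I|\leq \ell(K)-1$. (When $n=0$ we have $\sum_{i\in\emptyset}x_i = 0 \in\textup{relint}(K)$, forcing $K$ to be a linear subspace, so $\ell(K)=1$ and $I=\emptyset$ works; the argument degenerates gracefully.) The main thing to be careful about is the base case and the role of $F_K(0)$: one must check that $F_0 = F_K(0)$ really is a nonempty face (it is — it equals $\textup{Lin}(K)$ and contains $0$) so that it legitimately starts a chain of nonempty faces, and that the strict-containment step is valid even at the first stage. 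I do not expect any serious obstacle — the whole point, as the authors note, is that replacing ranks and column spaces by the length function and the minimal-face operator makes Averkov's linear-algebraic lemma go through verbatim once \cref{lem:face-arithmetic} is in hand. The only mild subtlety is ensuring the join/monotonicity bookkeeping is tight enough to get exactly $\ell(K)-1$ rather than $\ell(K)$.
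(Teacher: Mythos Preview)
Your proof is correct and takes a genuinely different route from the paper's. The paper argues by induction on $\ell(K)$: it first reduces to the pointed case via the decomposition $K = K' + \textup{Lin}(K)$, then for pointed $K$ takes an inclusion-minimal $I$ with $F(\sum_{i\in I}x_i)=K$, removes some $j\in I$, applies the inductive hypothesis inside the strictly smaller face $F(\sum_{i\in I\setminus\{j\}}x_i)$ to produce $I'\subseteq I\setminus\{j\}$ of size at most $\ell(K)-2$ generating the same face, and concludes from minimality that $I = I'\cup\{j\}$. Your greedy construction instead builds the witnessing chain $F_0\subsetneq F_1\subsetneq\cdots\subsetneq F_{|I|}=K$ directly, which is cleaner in two respects: it avoids the induction, and it handles non-pointed $K$ uniformly (starting the chain at $F_K(0)=\textup{Lin}(K)$ absorbs the lineality space automatically, whereas the paper needs a separate paragraph for this). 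It also makes the appearance of $\ell(K)$ completely transparent, as a literal bound on the length of the chain you exhibit. One cosmetic point: \cref{lem:face-arithmetic} is stated in the paper for \emph{proper} cones while you invoke items (3)--(5) for an arbitrary closed convex cone; those items do hold in that generality (and the paper's own proof tacitly uses them for cones that are pointed but not full-dimensional), so this is not a real gap.
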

\begin{proof}
First, we will prove the result under the assumption that $K$ is pointed.
We argue by induction on $\ell(K)$. For the base case, consider a closed pointed convex cone
with $\ell(K) = 1$. The only possibility is $K= \{0\}$. In this case if $x_1,\ldots,x_n\in K$
we can choose $I = \emptyset$ so that $|I| = 0 = \ell(\{0\})-1$ and $F(\sum_{i\in I}x_i) = F(0) = \{0\} = K$. 

Assume the statement holds for all closed pointed convex cones $K$ with $\ell(K)\leq k$, for some positive integer $k$.
Consider a closed pointed 
convex cone $C$ with $\ell(C) = k+1$. Let $x_1,\ldots,x_n\in C$ be such that $F(x_1+\cdots+x_n) = C$. 
Let $I\subseteq [n]$ be an inclusion-wise minimal subset of $[n]$ such that $F(\sum_{i\in I}x_i) = C$. 
Choose some $j\in I$ and observe that $x_j \notin F(\sum_{i\in I\setminus\{j\}} x_i)$ (by minimality of $I$)
and so $F(\sum_{i\in I\setminus\{j\}}x_i)$ is strictly contained in $C$. Hence,
\[ \textstyle{\ell(F(\sum_{i\in I\setminus\{j\}}x_i)) \leq \ell(C)-1 = k}.\]
Since $F(\sum_{i\in I\setminus\{j\}}x_i)$ is closed, convex, and pointed, by the induction hypothesis, 
there is some $I'\subseteq I\setminus\{j\}$ such that $|I'|\leq k-1$ and 
\[ \textstyle{F(\sum_{i\in I'}x_i) = F(\sum_{i\in I\setminus\{j\}}x_i).}\]
Then, by \cref{lem:face-arithmetic},
\begin{align*}
	\textstyle{ F(x_j + \sum_{i\in I'}x_i)} &=\textstyle{ F(x_j)\vee F(\sum_{i\in I'}x_i)}\\
 &\textstyle{= 
F(x_j)\vee F(\sum_{i\in I\setminus \{j\}}x_i) = F(\sum_{i\in I}x_i)=C.}\end{align*}
Since $I$ is minimal with this property and $I'\cup\{j\}\subseteq I$ 
we must have that $I'\cup\{j\}= I$. It then follows that $|I| = |I'|+1\leq k$, as required.

If $K$ is not pointed, then $K= K' + \textup{Lin}(K)$ where $K'$ is pointed and the sum is direct. Furthermore, 
$F'$ is a face of $K'$ if and only if $F'+\textup{Lin}(K)$ is a face of $K$. 
If $x = x'+z$ with $x'\in K'$ and $z\in \textup{Lin}(K)$ then $x\in \textup{relint}(K)$ if and only if $x'\in \textup{relint}(K')$.

Now, let $x_i = x_i' + z_i$ be decompositions of each of the $x_i$ so that $x_i'\in K'$ and $z_i\in \textup{Lin}(K)$ for all $i$. 
By assumption, $\sum_{i=1}^{n}x_i\in \textup{relint}(K)$ and so $\sum_{i=1}^{n}x_i'\in \textup{relint}(K')$. 
Since the lemma holds for the pointed case, 
there exists $I\subseteq [n]$ with $|I|\leq \ell(K')-1 = \ell(K)-1$ such that $\sum_{i\in I}x_i'\in \textup{relint}(K')$. 
Then $\sum_{i\in I}x_i \in \textup{relint}(K)$ as required.  
\end{proof}

Averkov's main lemma is the following result, which relies crucially on \cref{lem:averkov-subset} for its proof.
\begin{lemma}
\label{lem:averkov-main}
Let $m$ be a positive integer and let $S$ denote a finite set with cardinality at least $k$. Suppose that, for each $i\in [m]$,
there are maps $a_i:\binom{S}{k}\rightarrow \cS_+^k$ and $b_i:S\rightarrow \cS_+^k$
such that 
\[ \sum_{i=1}^{m}\langle a_i(T),b_i(s)\rangle = 0\quad\iff\quad s\in T.\]
Then $|S| < R_k(k+1;(k+1)^m)$.
\end{lemma}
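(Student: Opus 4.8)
The plan is to follow Averkov's template: use Ramsey's theorem to extract a large ``monochromatic'' configuration, and then apply \cref{lem:averkov-subset} to derive a contradiction from it.

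\emph{Reduction to per-term vanishing.} Since each $a_i(T)$ and $b_i(s)$ is positive semidefinite, $\langle a_i(T),b_i(s)\rangle\geq 0$, so the hypothesis $\sum_i\langle a_i(T),b_i(s)\rangle=0$ is equivalent to $\langle a_i(T),b_i(s)\rangle=0$ for \emph{every} $i$. For positive semidefinite $X,Y$ one has $\langle X,Y\rangle=0$ if and only if $\textup{col}(Y)\subseteq\ker(X)$. Thus the hypothesis says precisely: $s\in T$ iff $\textup{col}(b_i(s))\subseteq\ker(a_i(T))$ for all $i$; equivalently, $s\notin T$ iff $\textup{col}(b_i(s))\not\subseteq\ker(a_i(T))$ for at least one $i$.

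\emph{The Ramsey step.} Suppose for contradiction that $|S|\geq R_k(k+1;(k+1)^m)$. Color each $T\in\binom{S}{k}$ by the tuple $(\textup{rank}\,a_1(T),\ldots,\textup{rank}\,a_m(T))\in\{0,1,\ldots,k\}^m$; this uses $(k+1)^m$ colors. Ramsey's theorem supplies a set $W\subseteq S$ with $|W|=k+1$ on which the coloring is constant: $\textup{rank}\,a_i(T)=\rho_i$ for all $T\in\binom{W}{k}$ and all $i$, so $\dim\ker a_i(T)=k-\rho_i$ is independent of $T$.

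\emph{The contradiction on $W$.} Write $W=\{w_0,\ldots,w_k\}$ and, for each $j$, let $T_j=W\setminus\{w_j\}$ be the unique $k$-subset of $W$ avoiding $w_j$. By the reduction above, for $j'\neq j$ we have $\textup{col}(b_i(w_{j'}))\subseteq\ker a_i(T_j)$ for all $i$, while for each $j$ there is some index $i$ with $\textup{col}(b_i(w_j))\not\subseteq\ker a_i(T_j)$. The plan is to apply \cref{lem:averkov-subset}, for each fixed $i$, to the positive semidefinite matrices $\{b_i(w_{j'})\}_{j'\neq j}$ (or, dually, to $\{a_i(T_j)\}_j$): their sum has column space inside $\ker a_i(T_j)$, hence rank at most $k-\rho_i$, so a subset of at most $k-\rho_i$ of them already spans the same column space. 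Substituting such a small spanning subset back into the vanishing relations shows that for each $i$ only ``few'' of the indices $j$ can witness the non-containment $\textup{col}(b_i(w_j))\not\subseteq\ker a_i(T_j)$; a counting argument over $i=1,\ldots,m$ — crucially using that the ranks are pinned to the $\rho_i$ and that all column spaces sit inside $\RR^k$ — then forces some $w_j$ to belong to $T_j$, contradicting $w_j\notin T_j$. Therefore $|S|<R_k(k+1;(k+1)^m)$.

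The delicate part is this last step: one must exploit that the monochromatic coloring fixes the ranks $\rho_i$, since without that control a finite product of small positive semidefinite cones is too flexible for the count to close. This is exactly the place where \cref{lem:averkov-subset} does the real work, and it is the piece of Averkov's argument that \cref{lem:js-subset} is designed to replace in the general setting, with ``rank'' giving way to ``length of a chain of faces''.
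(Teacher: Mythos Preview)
Your Ramsey-and-contradiction framework is right, but you colored the wrong object, and the vague ``counting argument'' you gesture at cannot be completed with this coloring. The paper (in the proof of \cref{lem:js-main}, which specializes to \cref{lem:averkov-main}) colors $T\in\binom{S}{k}$ by the tuple of ranks of the \emph{sums} $\sum_{t\in T}b_i(t)$, not by the ranks of $a_i(T)$. With that coloring, \cref{lem:averkov-subset} applied to $\{b_i(w)\}_{w\in W}$ produces a single $k$-subset $T''\subset W$ with $\textup{col}\bigl(\sum_{t\in T''}b_i(t)\bigr)=\textup{col}\bigl(\sum_{w\in W}b_i(w)\bigr)$; monochromaticity (equal ranks) then forces this equality for \emph{every} $k$-subset $T\subset W$. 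Hence $a_i(T)$ vanishes on the whole column space $\textup{col}\bigl(\sum_{w\in W}b_i(w)\bigr)\ni\textup{col}(b_i(s))$ for the missing element $s$, giving the contradiction directly---no counting over $i$ is needed.

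Your coloring, by contrast, does not suffice: being monochromatic for $\textup{rank}\,a_i(T)$ together with the orthogonality conditions on a set $W$ of size $k+1$ is \emph{not} contradictory. Here is a concrete witness with $k=2$, $m=3$. Take $W=\{w_0,w_1,w_2\}$, $T_j=W\setminus\{w_j\}$, and let $e_1,e_2$ be the standard basis of $\RR^2$. For $i\in\{1,2,3\}$ set
\[
a_i(T_j)=\begin{cases} e_2e_2^T & \text{if } j=i-1,\\ e_1e_1^T & \text{otherwise},\end{cases}
\qquad
b_i(w_j)=\begin{cases} e_2e_2^T & \text{if } j=i-1,\\ 0 & \text{otherwise}.\end{cases}
\]
Then every $a_i(T_j)$ has rank $1$, so $W$ is monochromatic with color $(1,1,1)$ under your coloring, yet one checks directly that $\sum_{i}\langle a_i(T_j),b_i(w_{j'})\rangle=0$ iff $j'\in T_j$. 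So no contradiction is obtainable from your monochromatic hypothesis alone, and in particular your bound $|J_i|\le k-\rho_i$ (here $|J_i|\le 1$) is not strong enough once $m\ge k+1$. Note that under the paper's coloring this $W$ is \emph{not} monochromatic: for $i=1$, $\textup{rank}\bigl(b_1(w_1)+b_1(w_2)\bigr)=0$ while $\textup{rank}\bigl(b_1(w_0)+b_1(w_2)\bigr)=1$. The fix is simply to switch to coloring by the ranks of $\sum_{t\in T}b_i(t)$.
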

We can use essentially the same argument as Averkov to establish the following 
analogue of \cref{lem:averkov-main}, once we replace column spaces by minimal faces and rank with the
largest length of a chain of nonempty faces (minus one). 
We recover Averkov's lemma as a special case by noting that 
the cone $\cS_+^k$ is self-dual and satisfies $\ell(\cS_+^k) = k+1$. 
\begin{lemma}
\label{lem:js-main}
Let $m$ be a positive integer and let $K_1,K_2, \ldots,K_m$ be closed convex cones. 
Let $S$ denote a finite set with cardinality at least $k$. Suppose that, for each $i\in [m]$,
there are maps $a_i:\binom{S}{k}\rightarrow K_i^*$ and $b_i:S\rightarrow K_i$
such that 
\[ \sum_{i=1}^{m}\langle a_i(T),b_i(s)\rangle = 0\quad\iff\quad s\in T.\]
If $\ell(K_i)\leq k+1$ for $i=1,2,\ldots,m$ then $|S| < R_k(k+1;(k+1)^m)$.
\end{lemma}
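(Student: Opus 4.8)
The plan is to mimic Averkov's proof of \cref{lem:averkov-main}, substituting \cref{lem:js-subset} for \cref{lem:averkov-subset} throughout, and using the face lattice of each $K_i$ in place of the column-space lattice of positive semidefinite matrices. First I would argue by contradiction: suppose $|S| \geq R_k(k+1;(k+1)^m)$. The key observation is that for each $i\in[m]$ and each $s\in S$, the point $b_i(s)$ lies in $K_i$, so it has a well-defined minimal face $F_{K_i}(b_i(s))$. Since $\ell(K_i)\leq k+1$, the chain of faces $\{0\}$ (or the minimal face) up through $K_i$ passing through $F_{K_i}(b_i(s))$ has length at most $k+1$, so there are at most $k+1$ possible "positions" in the chain; more usefully, I want to record, for each $k$-subset $T$, the face $F_{K_i}\bigl(\sum_{s\in T} b_i(s)\bigr)$, which by \cref{lem:face-arithmetic}(5) equals $\bigvee_{s\in T} F_{K_i}(b_i(s))$. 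Each such face has $\ell(\cdot) \in \{1,2,\ldots,k+1\}$, giving at most $k+1$ values. Defining $g:\binom{S}{k}\to [k+1]^m$ by $g(T) = (\ell(F_{K_i}(\sum_{s\in T}b_i(s))))_{i=1}^m$, Ramsey's theorem produces $W\subseteq S$ with $|W| = k+1$ on which $g$ is constant.

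The heart of the argument is then to derive a contradiction from the hypothesis $\sum_i \langle a_i(T), b_i(s)\rangle = 0 \iff s\in T$, restricted to $k$-subsets and elements of $W$. Fix $T_0 \in \binom{W}{k}$ and let $w_0$ be the unique element of $W\setminus T_0$. For each $i$, applying \cref{lem:js-subset} to the face $G_i := F_{K_i}(\sum_{s\in T_0} b_i(s))$ (the points $b_i(s)$, $s\in T_0$, sum to a point in $\textup{relint}(G_i)$ by \cref{lem:face-arithmetic}), I obtain a subset $I_i \subseteq T_0$ with $|I_i| \leq \ell(G_i) - 1 \leq k$ such that $F_{K_i}(\sum_{s\in I_i} b_i(s)) = G_i$. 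I then want to pick an element $t^\ast \in T_0$ avoiding $\bigcup_i I_i$: this is possible when $|T_0| = k > \sum_i |I_i|$ — wait, that isn't automatic, so the actual trick (as in Averkov) is subtler: one uses that all $k$-subsets of $W$ give the same tuple of face-chain-lengths, hence $F_{K_i}(\sum_{s\in T}b_i(s))$ is the same face $G_i$ for \emph{every} $T\in\binom{W}{k}$ (since if two faces of $K_i$ have the same $\ell$ and one is contained in the other — via adding/removing an element — they are equal by monotonicity of $\ell$). This "constancy of the face" is the crucial upgrade that constancy of $\ell$ buys us.

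Granting that $F_{K_i}(\sum_{s\in T} b_i(s)) = G_i$ for all $T \in \binom{W}{k}$ and all $i$, I proceed as Averkov does: for $T_0 = W\setminus\{w_0\}$, the element $b_i(w_0)$ satisfies $F_{K_i}(b_i(w_0)) \vee G_i = F_{K_i}(\sum_{s\in (T_0\setminus\{t\})\cup\{w_0\}} b_i(s)) \vee \cdots = G_i$ for any $t\in T_0$ (using that $(T_0\setminus\{t\})\cup\{w_0\}$ is again a $k$-subset of $W$), so $F_{K_i}(b_i(w_0)) \subseteq G_i$, i.e., $b_i(w_0)$ lies in the face $G_i$. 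Now the linear functional $a_i(T_0) \in K_i^*$ is nonnegative on $K_i$ and, by hypothesis (since $w_0 \notin T_0$ would normally give a positive value — here I need to be careful about which direction), the vanishing/positivity condition translates into: $\langle a_i(T_0), b_i(s)\rangle = 0$ for $s \in T_0$, which forces $a_i(T_0)$ to vanish on all of $G_i = F_{K_i}(\sum_{s\in T_0}b_i(s))$ (a nonnegative functional vanishing at a relative interior point of a face vanishes on that face), hence $\langle a_i(T_0), b_i(w_0)\rangle = 0$ since $b_i(w_0) \in G_i$. Summing over $i$ gives $\sum_i \langle a_i(T_0), b_i(w_0)\rangle = 0$, contradicting $w_0 \notin T_0$. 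Therefore $|S| < R_k(k+1;(k+1)^m)$.

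The main obstacle I anticipate is justifying the "constancy of the face" step rigorously — that constancy of the chain-length function $\ell$ on $\binom{W}{k}$ actually forces the faces $F_{K_i}(\sum_{s\in T}b_i(s))$ to coincide as $T$ ranges over $\binom{W}{k}$. The argument is: any two $k$-subsets $T, T'$ of $W$ are connected by a sequence of single-element swaps, and a single swap changes the face by a join with one more generator and a "drop" of another; using \cref{lem:face-arithmetic} one shows the face for $T\cup T'$ (a set of size between $k$ and $k+1=|W|$) contains both $F(T)$ and $F(T')$ and can be built from a $k$-subset, and since $\ell$ is constant one gets equality. This is exactly the combinatorial core that Averkov handles for column spaces, and it transfers verbatim once monotonicity of $\ell$ along strict face inclusions (stated in the excerpt) replaces strict monotonicity of rank; I would write this out carefully as it is the one place where a naive adaptation could have a gap.
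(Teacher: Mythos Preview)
Your proposal is correct and follows essentially the same route as the paper: color $k$-subsets by the tuple of chain-lengths $\ell(F_{K_i}(\sum_{s\in T}b_i(s)))$, apply Ramsey to get a monochromatic $(k{+}1)$-set $W$, use \cref{lem:js-subset} together with strict monotonicity of $\ell$ to upgrade ``constant $\ell$'' to ``constant face'' across $\binom{W}{k}$, and then derive the contradiction exactly as you describe. The paper phrases the face-constancy step as showing $F(b_{T,i})=F(b_{W,i})$ for every $T\in\binom{W}{k}$ (rather than connecting $k$-subsets pairwise), but this is the same argument, and your identification of that step as the one needing care is accurate.
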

\begin{proof}
For each $T\subseteq S$ and each $i\in [m]$ define
\[ b_{T,i} := \sum_{t\in T}b_{i}(t)\in K_i\quad\textup{and}\quad d_{T,i} := \ell(F(b_{T,i})).\]
For each $T\in \binom{S}{k}$, we assign `color' $(d_{T,1},\ldots,d_{T,m})$ to the set $T$. Since $\ell(K_i)\leq k+1$, 
then the same is true for any nonempty face of $K_i$, and so  $d_{T,i}\in \{0,1,\ldots,k\}$ 
for each $T$ and $i$. As such, we are coloring with at most $(k+1)^m$ colors.

Seeking a contradiction, let us assume that $|S|\geq R_k(k+1;(k+1)^m)$. 
Then, by the definition of the Ramsey number, there exists 
$W\subseteq S$ with $|W|=k+1$ such that all $k$-element subsets of $W$ have the same color $(d_1,d_2,\ldots,d_m)$.
More explicitly, for all $T\subset W$ with $|T|=k$ we have $(d_{T,1},\dots,d_{T,m}) = (d_1,\ldots,d_m)$. 

\paragraph{Claim} For each $i\in [m]$ and each $T\subset W$ such that $|T|=k$, we have that $F(b_{T,i}) = F(b_{W,i})$. 
If this were not the case, then there exists such an $i\in [m]$ and $T\subset W$ with $|T|=k$ such that 
$F(b_{W,i})$ strictly contains $F(b_{T,i})$.
By \cref{lem:js-subset}, there is a subset $T'\subseteq W$ with $|T'|\leq \ell(F(b_{W,i}))-1 \leq k$ 
such that $F(\sum_{t'\in T'}b_i(t')) = F(b_{W,i})$. 
By adding elements of $W$ (if necessary) we can form a set $T''$ such that $T'\subseteq T''\subset W$ and $|T''|=k$
such that 
\[ F\left(\sum_{t''\in T''}b_i(t'')\right) \supseteq F\left(\sum_{t'\in T'}b_i(t')\right) = F(b_{W,i}).\]
One the one hand, since all $k$-element subsets of $W$ have the same `color', $d_i = d_{T'',i} \geq \ell(F(b_{W,i}))$. 
On the other hand, since $F(b_{W,i})$ strictly contains $F(b_{T,i})$ we have that $\ell(F(b_{W,i})) > \ell(F(b_{T,i})) = d_i$. 
This contradiction establishes the claim.\\[0.2cm]

With the claim established, we write $W = T\cup \{s\}$ where $|T|=k$ and $s\notin T$. 
By assumption on $a_i$ and $b_i$, we have that 
\[ \sum_{i=1}^{m}\langle a_{i}(T),b_{i}(t)\rangle = 0\quad \textup{ for all $t\in T$}.\]
Since $a_{i}(T)\in K_i^*$ and $b_{i}(t)\in K_i$ we can conclude that 
\[ \langle a_{i}(T),b_{i}(t)\rangle = 0\quad\textup{for all $t\in T$ and all $i\in[m]$}.\]
But then 
\[ \left\langle a_{i}(T),\sum_{t\in T}b_{i}(t)\right\rangle = 0.\]
In particular, for each $i\in [m]$ the linear functional defined by $a_{i}(T)$ vanishes on the face
\[ \textstyle{F(\sum_{t\in T}b_{i}(t)) = F(b_{T,i}) = F(b_{W,i})}\]
where the last equality is the claim. 
But then, even though $s\notin T$, we have that
\[ 0 = \left\langle a_i(T),\sum_{w\in W}b_i(w)\right\rangle = \left\langle a_{i}(T), b_i(s) + \sum_{t\in T}b_{i}(t)\right\rangle= \langle a_i(T),b_i(s)\rangle\]
for all $i\in [m]$. This contradicts the fact that $a$ and $b$ satisfy
$0 = \sum_{i=1}^{m}\langle a_i(T),b_i(s)\rangle$
if and only if $s\in T$. It then follows that $|S| < R_k(k+1;(k+1)^m)$. 
\end{proof}

Following Averkov, we combine \cref{lem:averkov-main} with the factorization theorem of Gouveia, Parrilo, and 
Thomas (\cref{thm:gpt}) to turn the 
statement about the structure of maps $a_i$ and $b_i$ into a statement about lifts of convex cones.
We briefly repeat the argument here, in our setting, to make the paper more self-contained.
\begin{proposition}
\label{prop:finitenlift}
Suppose that a proper convex cone $C\subseteq \RR^n$ has a proper 
$K_1\times \cdots\times  K_m$-lift where $\ell(K_i) \leq k+1$ for $i\in [m]$. If $C$ is $k$-neighborly with respect
to some finite set $V\subseteq \textup{ext}(C)$ then $|V|<R_k(k+1;(k+1)^m)$.
\end{proposition}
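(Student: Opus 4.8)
The plan is to combine the factorization theorem (\cref{thm:gpt}) with the just-proved combinatorial lemma (\cref{lem:js-main}). The factorization theorem gives us exactly the maps $a_i : C^* \to K_i^*$ and $b_i : C \to K_i$ satisfying $\langle x, y\rangle = \sum_{i=1}^m \langle b_i(x), a_i(y)\rangle$ for all $(x,y) \in C \times C^*$. Meanwhile, \cref{lem:js-main} needs maps defined on $\binom{S}{k}$ and on $S$ respectively, satisfying the biconditional $\sum_i \langle a_i(T), b_i(s)\rangle = 0 \iff s \in T$. So the work is to convert the $k$-neighborliness data on the finite set $V$ into input of the form required by \cref{lem:js-main}.

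Here is how I would do that. Write $S := V$, which has $|S| \geq k$ (assuming $|V| \geq k$; if $|V| < k$ then $|V| < R_k(k+1;(k+1)^m)$ holds trivially since the Ramsey number is at least $k+1$). For each $s \in S$ — which is a normalized extreme ray of $C$, hence an element of $C$ — set $b_i(s) := b_i(s) \in K_i$, reusing the factorization map. For each $k$-element subset $T \in \binom{S}{k}$, the $k$-neighborliness of $C$ with respect to $V$ provides a functional $f_T \in C^*$ with $\langle f_T, v \rangle = 0$ for $v \in T$ and $\langle f_T, v \rangle > 0$ for $v \in V \setminus T$; set $a_i(T) := a_i(f_T) \in K_i^*$. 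Then for any $s \in S$ and $T \in \binom{S}{k}$, the factorization identity applied to the pair $(s, f_T) \in C \times C^*$ gives
\[
\sum_{i=1}^m \langle a_i(T), b_i(s)\rangle = \sum_{i=1}^m \langle b_i(s), a_i(f_T)\rangle = \langle s, f_T\rangle,
\]
which equals $0$ if $s \in T$ and is strictly positive if $s \in V \setminus T$. Since $T \subseteq V$ and $s \in V$, these cases are exhaustive, so the biconditional $\sum_i \langle a_i(T), b_i(s)\rangle = 0 \iff s \in T$ holds exactly as required.

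With the hypotheses of \cref{lem:js-main} verified — the maps $a_i, b_i$ constructed above, and $\ell(K_i) \leq k+1$ by assumption — the lemma yields $|S| = |V| < R_k(k+1; (k+1)^m)$, which is the desired conclusion. The only mild subtlety, and the one point worth stating carefully, is the normalization/domain bookkeeping: ensuring that elements of $V$ (normalized extreme rays of $C$) are legitimately fed into the domain $C$ of $b_i$, and that the neighborliness functionals $f_T$ lie in $C^*$ so they can be fed into the domain $C^*$ of $a_i$. Both are immediate from the definitions — $\|v\| = 1$ and $\mathbb{R}_+ v \in \textup{Ext}(C)$ forces $v \in C$, and \cref{def:nb} explicitly gives $f_T \in C^*$ — so there is no real obstacle; the whole argument is essentially a dictionary translation between the two earlier results. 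I do not anticipate a genuinely hard step here, since all the substantive work (the Ramsey-coloring argument and the face-arithmetic reduction) has already been carried out in \cref{lem:js-subset} and \cref{lem:js-main}.
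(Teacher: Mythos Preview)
Your proposal is correct and follows essentially the same approach as the paper: invoke the factorization theorem to obtain maps $\tilde a_i,\tilde b_i$, compose with the neighborliness functionals $f_T$ and restrict to $V$ to build the inputs for \cref{lem:js-main}, then apply that lemma. The only differences are cosmetic---you handle the trivial case $|V|<k$ explicitly and overload the symbols $a_i,b_i$ where the paper introduces separate $\tilde a_i,\tilde b_i$---but the mathematics is identical.
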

\begin{proof}
Since $C$ is $k$-neighborly with respect to $V$, for each $k$-element subset $W\in \binom{V}{k}$ there is $f_W\in C^*$
such that $\langle f_W, w\rangle$ vanishes if $w\in W$ and is positive if $w\in V\setminus W$.
Since $C$ has a proper $K_1\times \cdots \times K_m$ lift, by the factorization theorem (\cref{thm:gpt}) we know that
for $i=1,2,\ldots,m$ there exist maps 
$\tilde{a}_i:C^*\rightarrow K_i^*$ and 
$\tilde{b}_i: C \rightarrow K_i$ such that 
$\langle f,x\rangle = \sum_{i=1}^{m}\langle \tilde{a}_i(f),\tilde{b}_i(x)\rangle$ for all 
$f\in C^*$ and all $x\in C$. 
Define $a_i:\binom{V}{k}\rightarrow K_i^*$ by 
$a_i(W) = \tilde{a}_i(f_W)$
and 
$b_i:V \rightarrow K_i$ by $b_i(w) = \tilde{b}_i(w)$.
Then $\langle f_W,w\rangle = \sum_{i=1}^{m}\langle a_i(W),b_i(w)\rangle$
which vanishes when $w\in W$ and is positive when $w\in V\setminus W$. 
It follows from \cref{lem:js-main} that $|V|<R_k(k+1;(k+1)^m)$.
\end{proof}
\Cref{thm:main} is a straightforward consequence of \cref{prop:finitenlift}.
\begin{proof}[{Proof of \cref{thm:main}}]
By assumption, there exists some finite $V\subset \textup{ext}(C)$ such that
$|V|\geq R_k(k+1;(k+1)^m)$ and $C$ is $k$-neighborly with respect to $V$.
Then by the contrapositive of \cref{prop:finitenlift} it follows that $C$ does
not have a proper $K_1\times\cdots\times K_m$ lift with $\ell(K_i) \leq k+1$ for all $i\in [m]$.
It follows from \cref{lem:proper-ced} that $C$ does not have a (possibly
improper) $\tilde{K}_1\times \cdots \times \tilde{K}_m$-lift with
$\ell(\tilde{K}_i)\leq k+1$ for all $i\in [m]$, completing the proof.
\end{proof}
\section{Discussion}
\label{sec:disc}
In this paper we have shown that for a convex cone $C$, having a certain
neighborliness property is an obstruction to having a $K$-lift where $K$ is a
Cartesian product of cones all of which only have short chains of faces.  Our
argument is a fairly direct generalization of an argument of Averkov showing
that the same neighborliness property is an obstruction to having an
$(\cS_+^k)^m$-lift.  

Although we only stated qualitative results about the non-existence of lifts,
the approach taken in this paper could be made quantitative in the following
sense.  If $C$ is $k$-neighborly with respect to a finite set $V$ and
$\ell(K_i)\leq k+1$ for all $i$, then \cref{thm:main} does not rule out the
possibility that $C$ has a $K_1\times \cdots \times K_m$-lift.  However one
could, in principle, extract a lower bound on the number of factors $m$
required in any such lift in terms of $k$ and $|V|$.  Unfortunately, this gives
very weak lower bounds, since the Ramsey numbers $R_{k}(k+1;(k+1)^m)$ grow
extremely fast (and so the implied lower bounds on $m$ grows extremely slowly).
For convex cones that do have $K_1\times \cdots \times K_m$-lifts where
$\ell(K_i)\leq k+1$ for all $i$, it would be very interesting to develop
general techniques to establish stronger lower bounds on $m$, the number of
factors.  In the case $k=1$ this reduces to the study of lower bounds on the
linear programming extension complexity of a polyhedral cone.  

This paper gives some of the first results about non-existence of $K$-lifts
where $K$ is a hyperbolicity cone.  Currently the only other such result is a lower bound on
the size of PSD lifts of convex semialgebraic sets, based on quantifier
elimination~\cite{gouveia2013lifts}, that can be generalized directly to the
setting of hyperbolicity cones.  Fawzi and Safey El Din~\cite{fawzi2018lower}
strengthen the quantifier elimination-based bounds in the case of PSD lifts by
exploiting connections with the algebraic degree of semidefinite
programming~\cite{nie2010algebraic}.  It would be interesting to develop a
similar approach to devise lower bounds on natural notions of complexity for
lifts using hyperbolicity cones.

\paragraph{Acknowledgments} I would like to thank Hamza Fawzi for encouraging and insightful comments related to this work,
Venkat Chandrasekaran for introducing me to the notion of the length of the longest chain of nonempty faces of a convex cone, 
and the anonymous referees for their careful reading and very helpful comments.

\bibliographystyle{alpha}
\bibliography{LB2019-bib}

\end{document}